\numberwithin{equation}{section} 
\newcommand{\gad}{\mathrel{\rotatebox[origin=c]{180}{$\dag$}}}
\pgfplotsset{compat=1.18}           
\tikzset{
	place/.style={circle, thick, draw=black, fill=gray!50, minimum size=20mm},
	state/.style={circle, thick, draw=blue!75, fill=blue!20, minimum size=20mm},
	cross/.pic = {
		\draw[rotate = 45] (-0.2,0) -- (0.2,0);
		\draw[rotate = 45] (0,-0.2) -- (0, 0.2);
	}
}
\newcommand{\inner}[1]{\langle #1\rangle}
\newcommand{\floor}[1]{\lfloor #1 \rfloor}
\newcommand{\ceil}[1]{\lceil #1 \rceil}
\newcommand{\eps}{\epsilon}
\newcommand{\rmd}{\mathrm{d}}   
\newcommand{\ndef}[1]{{\textcolor{blue}{#1}}}
\newcommand{\z}{\mathbf{z}}
\renewcommand{\r}{\mathbf{r}}
\newcommand{\T}{\mathbb{T}}
\newcommand{\where}{\quad\text{where}\quad}
\theoremstyle{plain}
\newtheorem{Th}{Theorem}[section]
\newtheorem{Lemma}[Th]{Lemma}
\newtheorem{Cor}[Th]{Corollary}
\theoremstyle{definition}
\newtheorem{Def}[Th]{Definition}
\newtheorem{Rem}[Th]{Remark}
\newtheorem{?}[Th]{Problem}
\title[]
{Hydrodynamic Limit with a Weierstrass-type result}
\author{Gabriel S. Nahum$^{\gad}$}
\email{{\tt gabriel.nahum@protonmail.com}}
\begin{document}

\begin{abstract}
We show that any positive, continuous, and bounded function can be realised as the diffusion coefficient of an evolution equation associated with a gradient interacting particle system. The proof relies on the construction of an appropriate model and on the entropy method.
\end{abstract}

\maketitle

\renewcommand{\thefootnote}{} 
\footnote{$^{\gad}$\textit{E-mail address}:
    \texttt{gabriel.nahum@protonmail.com}}
\renewcommand{\thefootnote}{\arabic{footnote}}

\tableofcontents

\section{Introduction}

Understanding the macroscopic behaviour of interacting particle systems is a central theme in probability and statistical mechanics. This work studies the local density of particles in a discrete Markovian system, which, after a rescaling in space and time, converges to the solution of a particular differential equation. From a probabilistic viewpoint, this corresponds to a law of large numbers. This passage from the discrete to the continuum, or from the microscopic to the macroscopic, is known in the physics literature as the hydrodynamic limit. Within the landscape of interacting particle systems, gradient systems play an important role, as their local structure makes them mathematically very tractable. These are models in which the algebraic current can be expressed as the discrete gradient of a potential \cite[II, Subsection 2.4]{spohn:book}, further simplifying hydrodynamical analysis when compared to non-gradient models. For example, for a variety of models both equilibrium and non-equilibrium fluctuations can be obtained \cite{BCGPJS22,GJMRM22,GJMN20,GHS25} as opposed to non-gradient models where this remains largely an open problem (see discussion in \cite{PHD:clement} and references therein). Moreover, {in general the diffusion coefficient is described with Green Kubo formula \cite[II, Subsection 2.2]{spohn:book}, and particularising} for gradient systems one can {compute it} explicitly with the knowledge of the invariant measures. This is convenient in devising models associated with a target diffusion. A physical interpretation has also been proposed for these systems, pointing to a dynamical term in the description of the general diffusivity that vanishes if \cite[II, Subsection 2.2]{spohn:book} and only if \cite{makiko} the model is of gradient type, thus representing a purely static behaviour. It is therefore surprising {that few general} results within this class are available. The present work is a first attempt to address the fundamental question of how flexible such models can be: given a target diffusion equation, can one construct a microscopic dynamics whose empirical density evolves accordingly? It is worth mentioning that generality of functions has been investigated in the context of boundary conditions \cite{mangi}, allowing for generic polynomial functions via prescribed boundary dynamics evolving at the scale of the current. {An analysis of the dynamics in the bulk,} as in the present work, becomes in some aspects more challenging due to the requirement of maintaining the gradient property. {In this way, we shall restrict the studies to} the one-dimensional discrete torus.

To provide some additional context regarding possible diffusivities, a classical example of linear diffusivity arises from the Symmetric Simple Exclusion Process \cite{KL:book}, arguably the simplest dynamics for which much is known. In this model, particles perform nearest-neighbour random walks with no directional bias in a discrete lattice and subject to an exclusion rule that prevents multiple occupancy of sites, yielding a Markov process with conserved density. {Gradient models can encapsulate polynomial diffusivity, with the Porous Media Model (PMM) \cite{GLT} acting as a toy model} for diffusion coefficients of the form $D(\rho)=\rho^m$, for positive integer values of $m$. The dynamics is easily described: under the exclusion rule, a pair composed by a vacant and occupied sites exchange their occupation values with rate given by the number of boxes composed by $m$ aligned particles around the pair. {The derivation of non-polynomial diffusion from exclusion type dynamics was addressed in \cite{s2f}, where it was performed an extension of the dynamics to non-integer values of the parameter $m$ via a generalised binomial expansion, successful for values $m\in(-1,1)$.} This allows a microscopic description of the transition from slow to fast diffusion, continuously parametrised, by a collection of gradient models. {It is relevant to mention that the two-parts works \cite{FIS97,ES96} identify the diffusivity $D(\rho)=\rho^m$ for $m\geq 1$ for processes that are continuous in nature, associated with coupled oscillators and where other tools apply.}

{The work \cite{CGN24} follows on the direction of investigating non-polynomial diffusion in more generality and with long-range interactions. The works \cite{s2f,CGN24} evidence a deficiency in the porous media collection in generating particular diffusion, yielding possibly "negative rates" by definition, that was then addressed in \cite{N25}. In it, the author introduces a collection of gradient dynamics modelling the Bernstein polynomial basis and generalising the PMM.  Linear combinations in this basis, in the same spirit as \cite{s2f}, were analysed in \cite{N25}, allowing for the introduction of the complete PMM family, continuously parametrised and yielding the diffusivity $D(\rho)=\rho^m$ for any $m>1$, thus resolving the main open problem posed in \cite{s2f}.} 
The aforementioned works paved the way for the analysis carried out in the present text, culminating in a first Weierstrass-type result.

The model introduced here has critical differences from those studied in \cite{N25}, and requires the introduction of some novel arguments. The main strategy for proving the hydrodynamic limit, however, is analogous, as we follow the well established entropy method \cite{GPV}. This choice of methodology is justified both by the possibility of relying on previous and convenient results, and by the fact that it prepares the ground for a future complete open-boundary analysis. With this in mind, in the next sections we present only the main model and novel results, directing the reader to appropriate references for specific steps of the proof. {We highlight the need for novel arguments due to the non-uniform continuity of the microscopic potential associated with the gradient property (see Lemmas \ref{lem:h-treat} and \ref{lem:rep-h} and their applications in the proof of Theorem \ref{th:hydro}). This is the main technical challenge with respect to \cite{N25}}.

At a more technical level, and pointing to future work and open problems, the restriction to \textit{positive} diffusivities is crucial, as it ensures that the model is irreducible. 

\subsection{Open problems and future work}

{Our main model is irreducible, and the hopping rates depend on a “mesoscopic” part of the configuration: letting $N>> 1$ be the length of the system, the constraints are dependent of the information in a ball of radius $\ell_N+1=o(N)$, where $\ell_N\xrightarrow{}\infty$ as $N\xrightarrow{}\infty$. If the function $\beta$ were allowed to take the value zero, the resulting dynamics would belong to the class of \textit{non-cooperative} \cite{AS24} models, for which "mobile clusters" exist. Mobile clusters are blocks of configurations that can move together and facilitate the transport of mass thorough the system. These objects play a central phenomenological and technical role in non-irreducible systems, associated with diffusivities that can attain trivial value for specific densities \cite{GLT,BDGN,s2f,N25,LR:CDG2022,CGN24}. In the present setting, however, allowing diffusivities that attain zero would give rise to mobile clusters whose length depends on the size of the system, making them too large to be analysed via standard arguments. This is in contrast with \cite{N25}, where their size is independent of the length of the system and therefore of completely local nature. The nature of these clusters brings the system technically closer to a \textit{cooperative} model, that are we known to be non-gradient \cite{AS24}. {In this sense, it may act as a bridge between cooperative and non-cooperative models. This is left for future work. } 

We also identified a phase transition related with the range of interaction $2\ell_N+1$, when $\ell_N=O(N)$. The author is currently investigating the hydrodynamics of the resulting system in this regime, when in contact with reservoirs. One may also question the extension to higher dimensions. This can be performed straightforwardly by letting the process evolve as a one-dimensional model along each coordinate direction, yielding a higher-dimensional hydrodynamic equation. A challenging question concerns the derivation of fractional hydrodynamic equations, associated with long-range models. {This would require a ``long-jumps" extension of the nearest-neighbour dynamics in \cite{N24}, for which the arguments in \cite{LR:CDG2022} violate the gradient property.}

\subsection{Outline of the paper}
In Section \ref{sec:setup}, we introduce the discrete setup and the main process (Definition \ref{def:model}), and we discuss its gradient property (Lemma \ref{lem:grad}) along with relations to other models. The main result, Theorem \ref{th:hydro}, is stated in Subsection \ref{subsec:hl}. Its proof is performed in Section \ref{sec:hl} using the entropy method, which consists of three steps: tightness, characterization of limit points, and energy estimate. In this work, we focus primarily on the second step, providing references and justifications for the remaining steps, which is performed in Subsection \ref{subsec:scheme}. Technical approximations, the so-called ''replacement lemmas", specific to our dynamics and novel, are proved in Section \ref{sec:rep-lem}, while auxiliary replacement lemmas from the literature are collected in Appendix \ref{app:replacement} for completeness and internal references.

\section{Setup and Main result}\label{sec:setup}
Let \setcounter{footnote}{0}\footnote{Throughout this text, for the reader’s convenience, newly introduced notation and objects outside mathematical environments will be highlighted in \ndef{blue}.
} $ \ndef{\mathbb{N}_+} $ be the set of positive natural numbers and denote by $ \ndef{N} \in\mathbb{N}_+ $ a scaling parameter. Our discrete lattice is $ \ndef{\mathbb{T}_N} $, the one dimensional discrete torus, $ \mathbb{T}_N=\{1,\dots, N\} $ with the identification $ 0\equiv N $.  For any $ x < y\in\mathbb{Z} $, that can be viewed as elements in $\T_N$ by considering their standard projections, we define the discrete interval $ \ndef{\llbracket x,y\rrbracket} $, composed by all the points between $ x,y $ (including $ x,y $) in $ \mathbb{T}_N $, where the order has been inherited from the one in $\mathbb Z$.

The central dynamics of this work is an interacting particle system, following a Markovian law, satisfying the exclusion rule and situated on the discrete torus $\T_N$. A configuration of particles is an element of the state space $\ndef{\Omega_N}:=\{0,1\}^{\mathbb{T}_N}$ and will be recurrently denoted by the letters $\eta$ and $\xi$. We denote by $\ndef{\eta(x)}\in\{0,1\}$ the occupation value of $\eta\in\Omega_N$ at the site $x\in\mathbb{T}_N$. 

In the next subsection we introduce the main process and relevant properties, and in subsection \ref{subsec:analysis} we present the topological setting. Subsection \ref{subsec:hl} is devoted to presenting the main result, that will be shown in Section \ref{sec:hl}.

\subsection{The model}\label{subsec:model}
We start by introducing the relevant operators associated with the process.  
\begin{Def}
In what follows, let $\eta\in\Omega_N$ , $f:\Omega_N\to\mathbb{R}$ and $x,y,z\in\mathbb{T}_N$ be arbitrary:
\begin{itemize}
        \item Let $\ndef{\pi_x}:\Omega_N\to\{0,1\}$ be the projection $\pi_x(\eta)=\eta(x)$;
        
        \item The shift operator, $\ndef{\tau}:\eta\mapsto\tau\eta$, is defined through $\pi_x(\tau\eta)=\pi_{x+1}(\eta)$. We short-write $\ndef{\tau^i}=\circ_{j=1}^i\tau$ for its $i$-th composition;

        \item Denote by $\ndef{\theta_{x,y}}$ the operator that exchanges the occupation-value of the sites $x,y$,
        \begin{align}
            (\theta_{x,y}\eta)(z)
            =\eta(z)1_{z\neq x,y}
            +\eta(y)1_{z=x}
            +\eta(x)1_{z=y};
        \end{align}

        \item For $\mathcal{O}=\tau,\theta_{x,y}$, let $\ndef{\mathcal{O} f(\eta)}:=f(\mathcal{O}\eta)$.
    \end{itemize}
    Moreover, introduce the linear operators $\ndef{\nabla_{x,y}},\ndef{\nabla},\ndef{\Delta}$ through 
    \begin{align}
        \nabla_{x,y}[f]:=\theta_{x,y}f-f
        ,\quad
        \nabla[f]:=\tau f-f 
        \quad\text{and}\quad
        \Delta[f]=\nabla^2[\tau^{-1}f]
        .
    \end{align}
    Note that $\nabla$ corresponds to the forward difference operator and $\Delta$ to the discrete Laplacian operator.
\end{Def}

We are ready to introduce the model at the core of this paper.
\begin{Def}\label{def:model}
Let $\ndef{\beta}$ be a fixed continuous function in $\T$ and such that $0< \beta(u)\leq 1$ for each $u\in\T$. Fix $0<\ndef{\ell_N}=o(N)$ such that $\ell_N\to+\infty$ as $N\to+\infty$. We will study the time-scaled Markov generator $\ndef{\mathfrak{L}_N}$, given by
\begin{align}
    \mathfrak{L}_N
    :=N^2\mathcal{L}^{\beta,\ell_N}, 
\end{align}
The generator $\ndef{\mathcal{L}^{\beta,\ell_N}}$ is defined, for each $f:\Omega_N\to\mathbb{R}$, through 
\begin{align}\label{gen:r}
    \mathcal{L}^{\beta,\ell_N}[f]
    &:=\sum_{x\in\T_N}
    \tau^x\mathbf{c}_{\beta,\ell_N}
    \Big\{
        \tau^x\mathbf{e}_{0,1}
        +
        \tau^x\mathbf{e}_{1,0}
    \Big\}
    \nabla_{x,x+1}[f],
\end{align}
where the \textit{exclusion constraint} for a hopping to the right is given by $\mathbf{e}_{0,1}$, while to the left, $\mathbf{e}_{1,0}$; and $\tau^x\mathbf{c}_{\beta,\ell_N}$ the main dynamical constraint acting in the node $\{x,x+1\}$. They are precisely given, for each $\eta\in\Omega_N$, by $\textcolor{blue}{\mathbf{e}_{0,1}}(\eta)=\eta(0)(1-\eta(1))+(1-\eta(0))\eta(1)$ , $\textcolor{blue}{\mathbf{e}_{1,0}(\eta)}=(1-\eta(0))\eta(1)$; and $\textcolor{blue}{\mathbf{c}_{\beta,\ell_N}}>0$ is defined in terms of \textit{windows of interaction}: for each $0\leq j\leq \ell_N$, let $\ndef{W_j}:=\llbracket -j,-j+\ell_N+1\rrbracket\backslash\{0,1\}$ and shorten $\ndef{\inner{\eta}_{j}}\equiv\tfrac{1}{\ell_N}\sum_{x\in W_j}\eta(x)$ for the density in the box $W_j$. Then,
\begin{align}\label{constraint}
    \mathbf{c}_{\beta,\ell_N}(\eta)
    :=
    \frac{1}{\ell_N+1}
    \sum_{j=0}^{\ell_N}
    \beta(\inner{\eta}_{j})
    .
\end{align}
The dynamics is illustrated in Figure \ref{fig:dynamics}. It is convenient to shorten the rates
\begin{align}\label{rate-j}
    \mathbf{r}_{\beta,\ell_N}^j(\eta):=(\mathbf{e}_{0,1}(\eta)+\mathbf{e}_{1,0}(\eta))\beta(\inner{\eta}_{j})
    \quad\text{and}\quad
    \mathbf{r}_{\beta,\ell_N}
    :=
    \frac{1}{\ell_N+1}
    \sum_{j=0}^{\ell_N}
    \mathbf{r}_{\beta,\ell_N}^j
    .
\end{align}
\end{Def}
By translation invariance, the constraint for an exchange at a node $\{x,x+1\}$, with generic $x\in\T_N$, is defined via translation, with rate coinciding with $\tau^x\mathbf{r}_{\beta,\ell_N}$

\begin{wrapfigure}{r}{0.65\textwidth}		
\centering
\tikzset{
pattern size/.store in=\mcSize, 
pattern size = 5pt,
pattern thickness/.store in=\mcThickness, 
pattern thickness = 0.3pt,
pattern radius/.store in=\mcRadius, 
pattern radius = 1pt}\makeatletter
\pgfutil@ifundefined{pgf@pattern@name@_6l8lnutyt}{
\pgfdeclarepatternformonly[\mcThickness,\mcSize]{_6l8lnutyt}
{\pgfqpoint{-\mcThickness}{-\mcThickness}}
{\pgfpoint{\mcSize}{\mcSize}}
{\pgfpoint{\mcSize}{\mcSize}}
{\pgfsetcolor{\tikz@pattern@color}
\pgfsetlinewidth{\mcThickness}
\pgfpathmoveto{\pgfpointorigin}
\pgfpathlineto{\pgfpoint{\mcSize}{0}}
\pgfpathmoveto{\pgfpointorigin}
\pgfpathlineto{\pgfpoint{0}{\mcSize}}
\pgfusepath{stroke}}}
\makeatother

 
\tikzset{
pattern size/.store in=\mcSize, 
pattern size = 5pt,
pattern thickness/.store in=\mcThickness, 
pattern thickness = 0.3pt,
pattern radius/.store in=\mcRadius, 
pattern radius = 1pt}
\makeatletter
\pgfutil@ifundefined{pgf@pattern@name@_38k0h76s6}{
\pgfdeclarepatternformonly[\mcThickness,\mcSize]{_38k0h76s6}
{\pgfqpoint{-\mcThickness}{-\mcThickness}}
{\pgfpoint{\mcSize}{\mcSize}}
{\pgfpoint{\mcSize}{\mcSize}}
{
\pgfsetcolor{\tikz@pattern@color}
\pgfsetlinewidth{\mcThickness}
\pgfpathmoveto{\pgfpointorigin}
\pgfpathlineto{\pgfpoint{0}{\mcSize}}
\pgfusepath{stroke}
}}
\makeatother

 
\tikzset{
pattern size/.store in=\mcSize, 
pattern size = 5pt,
pattern thickness/.store in=\mcThickness, 
pattern thickness = 0.3pt,
pattern radius/.store in=\mcRadius, 
pattern radius = 1pt}
\makeatletter
\pgfutil@ifundefined{pgf@pattern@name@_qcq731b7b lines}{
\pgfdeclarepatternformonly[\mcThickness,\mcSize]{_qcq731b7b}
{\pgfqpoint{0pt}{0pt}}
{\pgfpoint{\mcSize+\mcThickness}{\mcSize+\mcThickness}}
{\pgfpoint{\mcSize}{\mcSize}}
{\pgfsetcolor{\tikz@pattern@color}
\pgfsetlinewidth{\mcThickness}
\pgfpathmoveto{\pgfpointorigin}
\pgfpathlineto{\pgfpoint{\mcSize}{0}}
\pgfusepath{stroke}}}
\makeatother

 
\tikzset{
pattern size/.store in=\mcSize, 
pattern size = 5pt,
pattern thickness/.store in=\mcThickness, 
pattern thickness = 0.3pt,
pattern radius/.store in=\mcRadius, 
pattern radius = 1pt}
\makeatletter
\pgfutil@ifundefined{pgf@pattern@name@_gas5ov9zb lines}{
\pgfdeclarepatternformonly[\mcThickness,\mcSize]{_gas5ov9zb}
{\pgfqpoint{0pt}{0pt}}
{\pgfpoint{\mcSize+\mcThickness}{\mcSize+\mcThickness}}
{\pgfpoint{\mcSize}{\mcSize}}
{\pgfsetcolor{\tikz@pattern@color}
\pgfsetlinewidth{\mcThickness}
\pgfpathmoveto{\pgfpointorigin}
\pgfpathlineto{\pgfpoint{\mcSize}{0}}
\pgfusepath{stroke}}}
\makeatother

 
\tikzset{
pattern size/.store in=\mcSize, 
pattern size = 5pt,
pattern thickness/.store in=\mcThickness, 
pattern thickness = 0.3pt,
pattern radius/.store in=\mcRadius, 
pattern radius = 1pt}
\makeatletter
\pgfutil@ifundefined{pgf@pattern@name@_6q9qiikr6}{
\makeatletter
\pgfdeclarepatternformonly[\mcRadius,\mcThickness,\mcSize]{_6q9qiikr6}
{\pgfpoint{-0.5*\mcSize}{-0.5*\mcSize}}
{\pgfpoint{0.5*\mcSize}{0.5*\mcSize}}
{\pgfpoint{\mcSize}{\mcSize}}
{
\pgfsetcolor{\tikz@pattern@color}
\pgfsetlinewidth{\mcThickness}
\pgfpathcircle\pgfpointorigin{\mcRadius}
\pgfusepath{stroke}
}}
\makeatother
\tikzset{every picture/.style={line width=0.75pt}} 

\begin{tikzpicture}[x=0.75pt,y=0.75pt,yscale=-1,xscale=1]

\draw  [color={rgb, 255:red, 0; green, 0; blue, 0 }  ,draw opacity=1 ][pattern=_6l8lnutyt,pattern size=6pt,pattern thickness=0.75pt,pattern radius=0pt, pattern color={rgb, 255:red, 0; green, 0; blue, 0}][line width=1.5]  (348.95,112.18) -- (467.64,112.18) -- (467.64,145.18) -- (348.95,145.18) -- cycle ;
\draw  [color={rgb, 255:red, 0; green, 0; blue, 0 }  ,draw opacity=1 ][pattern=_38k0h76s6,pattern size=6pt,pattern thickness=0.75pt,pattern radius=0pt, pattern color={rgb, 255:red, 0; green, 0; blue, 0}][line width=1.5]  (145.5,221.18) -- (281.14,221.18) -- (281.14,254.18) -- (145.5,254.18) -- cycle ;
\draw    (125.5,290.91) -- (497.23,292.83) (159.52,287.09) -- (159.48,295.09)(193.52,287.27) -- (193.48,295.26)(227.52,287.44) -- (227.48,295.44)(261.52,287.62) -- (261.48,295.62)(295.52,287.79) -- (295.48,295.79)(329.52,287.97) -- (329.48,295.97)(363.52,288.14) -- (363.48,296.14)(397.52,288.32) -- (397.48,296.32)(431.52,288.49) -- (431.48,296.49)(465.52,288.67) -- (465.47,296.67) ;
\draw  [fill={rgb, 255:red, 155; green, 155; blue, 155 }  ,fill opacity=1 ] (281.14,270.68) .. controls (281.14,261.57) and (288.73,254.18) .. (298.09,254.18) .. controls (307.45,254.18) and (315.05,261.57) .. (315.05,270.68) .. controls (315.05,279.8) and (307.45,287.18) .. (298.09,287.18) .. controls (288.73,287.18) and (281.14,279.8) .. (281.14,270.68) -- cycle ;
\draw   (315.05,270.68) .. controls (315.05,261.57) and (322.64,254.18) .. (332,254.18) .. controls (341.36,254.18) and (348.95,261.57) .. (348.95,270.68) .. controls (348.95,279.8) and (341.36,287.18) .. (332,287.18) .. controls (322.64,287.18) and (315.05,279.8) .. (315.05,270.68) -- cycle ;
\draw  [color={rgb, 255:red, 0; green, 0; blue, 0 }  ,draw opacity=1 ][pattern=_qcq731b7b,pattern size=6pt,pattern thickness=0.75pt,pattern radius=0pt, pattern color={rgb, 255:red, 0; green, 0; blue, 0}][line width=1.5]  (348.95,188.18) -- (382.86,188.18) -- (382.86,221.18) -- (348.95,221.18) -- cycle ;
\draw  [color={rgb, 255:red, 208; green, 2; blue, 27 }  ,draw opacity=1 ][line width=1.5]  (281.14,254.18) -- (348.95,254.18) -- (348.95,287.18) -- (281.14,287.18) -- cycle ;
\draw  [color={rgb, 255:red, 0; green, 0; blue, 0 }  ,draw opacity=1 ][pattern=_gas5ov9zb,pattern size=6pt,pattern thickness=0.75pt,pattern radius=0pt, pattern color={rgb, 255:red, 0; green, 0; blue, 0}][line width=1.5]  (179.9,188.18) -- (281.14,188.18) -- (281.14,221.18) -- (179.9,221.18) -- cycle ;
\draw [pattern=_6q9qiikr6,pattern size=6pt,pattern thickness=0.75pt,pattern radius=0.75pt, pattern color={rgb, 255:red, 0; green, 0; blue, 0}] [dash pattern={on 4.5pt off 4.5pt}]  (145.5,254.18) -- (199.23,346.83) ;
\draw  [dash pattern={on 4.5pt off 4.5pt}]  (233.14,346.83) -- (281.14,254.18) ;
\draw  [dash pattern={on 4.5pt off 4.5pt}]  (215.23,108.5) -- (179.9,188.18) ;
\draw  [dash pattern={on 4.5pt off 4.5pt}]  (249.14,108.5) -- (281.14,188.18) ;
\draw  [dash pattern={on 4.5pt off 4.5pt}]  (348.95,112.18) -- (402.57,48.83) ;
\draw  [dash pattern={on 4.5pt off 4.5pt}]  (467.64,112.18) -- (436.48,48.83) ;
\draw  [dash pattern={on 4.5pt off 4.5pt}]  (249.14,108.5) -- (382.86,188.18) ;
\draw  [dash pattern={on 4.5pt off 4.5pt}]  (249.14,108.5) -- (348.95,188.18) ;

\draw (462.67,300.83) node [anchor=north west][inner sep=0.75pt]  [font=\tiny] [align=left] {$\displaystyle 5$};
\draw (405.73,262.08) node [anchor=north west][inner sep=0.75pt]    {$\cdots $};
\draw (202.73,262.08) node [anchor=north west][inner sep=0.75pt]    {$\cdots $};
\draw (182,300.33) node [anchor=north west][inner sep=0.75pt]  [font=\tiny] [align=left] {$\displaystyle N-3$};
\draw (428.67,300.33) node [anchor=north west][inner sep=0.75pt]  [font=\tiny] [align=left] {$\displaystyle 4$};
\draw (394.17,300.33) node [anchor=north west][inner sep=0.75pt]  [font=\tiny] [align=left] {$\displaystyle 3$};
\draw (359.67,300.33) node [anchor=north west][inner sep=0.75pt]  [font=\tiny] [align=left] {$\displaystyle 2$};
\draw (326.17,300.33) node [anchor=north west][inner sep=0.75pt]  [font=\tiny] [align=left] {$\displaystyle 1$};
\draw (292.67,300.33) node [anchor=north west][inner sep=0.75pt]  [font=\tiny] [align=left] {$\displaystyle 0$};
\draw (250.17,300.33) node [anchor=north west][inner sep=0.75pt]  [font=\tiny] [align=left] {$\displaystyle N-1$};
\draw (215,300.33) node [anchor=north west][inner sep=0.75pt]  [font=\tiny] [align=left] {$\displaystyle N-2$};
\draw (146.6,300.33) node [anchor=north west][inner sep=0.75pt]  [font=\tiny] [align=left] {$\displaystyle N-4$};
\draw (380,159.58) node [anchor=north west][inner sep=0.75pt]    {$\cdots $};
\draw (150,350.23) node [anchor=north west][inner sep=0.75pt]    {$W_{\ell_N}=\llbracket-\ell_N,-1\rrbracket$};
\draw (150,78.9) node [anchor=north west][inner sep=0.75pt]    {$W_{\ell_N-1}=\llbracket-\ell_N+1,-1\rrbracket\cup\{2\}$};
\draw (380,30) node [anchor=north west][inner sep=0.75pt]    {$W_{0}=\llbracket2,\ell+1\rrbracket$};
\draw (227.39,158.42) node [anchor=north west][inner sep=0.75pt]    {$\cdots $};
\end{tikzpicture}
\caption{The rate for an exchange at the node $\{0,1\}$ (box in \textcolor{red}{red}) depends on the density of particles in the windows $W_0,W_1,\dots,W_{\ell_N}$: it is given by the average, over these windows, of $\beta$ evaluated at the empirical average in the corresponding window, $\inner{\eta}_j$. Precisely, the exchange realises with rate $\frac{1}{\ell_N+1}
    \sum_{j=0}^{\ell_N}
    \beta(\inner{\eta}_{j})\rmd t$.}    \label{fig:dynamics}
\end{wrapfigure}
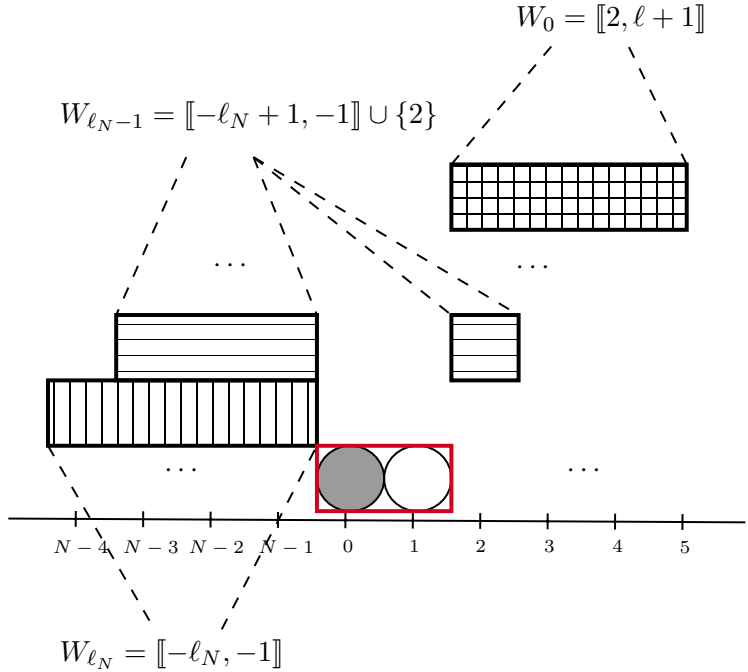

We make some remarks on the model just introduced.
\begin{Rem}
    Because the function $\beta$ is positive, the process $\mathcal{L}^{\beta,\ell_N}$ is irreducible. Moreover, being also symmetric and of gradient type, as we shall see shortly, the Bernoulli product measures with constant profile are invariant measures. Because $\beta$ is normalized, $\mathbf{c}_{\beta,\ell_N}\leq 1$. We shall make no further assumptions on $\beta$ beyond:
    \begin{itemize}
        \item boundedness;
        \item positivity;
        \item continuity.
    \end{itemize}
    
It is crucial to note that the process associated with $\mathcal{L}^{\beta,\ell_N}$ is a superposition of the elementary processes introduced in \cite{N24} and representing the Bernstein polynomial basis. For each $0\leq n\leq\ell_N$, consider the generator \ndef{$\mathcal{B}^{n,\ell_N}$}, given by replacing in \eqref{gen:r} the constraint $\mathbf{c}_{\beta,\ell_N}$ by $\mathbf{b}_{n,\ell_N}$, with 
\begin{align}\label{rate:b}
    \mathbf{b}_{n,\ell_N}
    :=\frac{1}{\ell_N+1}\sum_{j=0}^{\ell_N}
    \mathbf{b}_{n,\ell_N}^j
    \quad\text{and}\quad
    \mathbf{b}_{n,\ell_N}^j(\eta)
    :=\sum_{j=0}^{\ell_N}\mathbf{1}\{\inner{\eta}_{j}=\tfrac{n}{\ell_N}\}.
\end{align}
It holds that
\begin{align}\label{gen:decomp}
    \mathcal{L}^{\beta,\ell_N}
    =\sum_{n=0}^{\ell_N}
    \beta(\tfrac{n}{\ell_N})\mathcal{B}^{n,\ell_N}
    .
\end{align}

\end{Rem}
The constraint $\mathbf{c}_{\beta,\ell_N}$ should be thought of as a discrete analogous of the Bernstein polynomial of degree $\ell_N$, and the sequence $(\beta(\tfrac{n}{\ell_N}))_{n\geq 0}$ as the Bezier coefficients. Following this rationale, for future reference we introduce, for each integer $L\geq 0$, 
\begin{align}\label{binom}
    \text{B}_{\beta,L}:=
    \sum_{n=0}^{L}
    \beta(\tfrac{n}{L})
    \text{B}_{n,L}
    ,
    \quad\text{with}\quad
    \text{B}_{n,L}(\rho)
    =\binom{L}{n}\rho^n(1-\rho)^{L-n}
    ,\;\text{for any } \rho\in[0,1].
\end{align}

In \cite[Proposition 2.7]{N24} it is proved that the generator $\mathcal{B}^{n,\ell_N}$ is associated with a \textit{gradient model}, in the sense of \cite[II, Subsection 2.4]{spohn:book}. Thus, from the linear combination in \eqref{gen:decomp}, our main process inherits this property. This is the content of the next lemma, whose proof is direct given the expression in \cite[Proposition 2.7]{N24}.
\begin{Lemma}\label{lem:grad}
Let $\ndef{\mathbf{J}_\beta}$ be the algebraic current, defined through $\mathcal{L}^{\beta,\ell_N}[\pi_0]=\nabla\tau^{-1}[-\mathbf{J}_{\beta,\ell_N}]$, where $\mathbf{J}_{\beta,\ell_N}$ is given, explicitly, by
\begin{align}\label{curr-j}
    \mathbf{J}_{\beta,\ell_N}
    =
    \frac{1}{\ell_N+1}
    \sum_{j=1}^{\ell_N}
    \mathbf{J}_{\beta}^j(\eta)
\quad\text{with}\quad
    \mathbf{J}_{\beta}^j(\eta)
    =\beta(\inner{\eta}_{j})
    (
    \mathbf{e}_{0,1}(\eta)
    -
    \mathbf{e}_{1,0}(\eta)
    )
    .
\end{align}

For each $\eta\in\Omega_N$, let $\ndef{P_{\ell_N}}(\eta)$ be the number of particles in the box $\llbracket0,\ell_N\rrbracket$. It holds that
\begin{align}
    \mathbf{J}_{\beta,\ell_N}=-\nabla \mathbf{H}_{\beta,\ell_N},
\end{align}
where $\mathbf{H}_{\beta,\ell_N}=\mathbf{h}_{\beta,\ell_N}+\mathbf{g}_{\beta,\ell_N}$ and 
\begin{align}
\label{h}
    \mathbf{h}_{\beta,\ell_N}(\eta)
    &=
    \frac{1}{\ell_N+1}
    \sum_{n=0}^{P_{\ell_N}(\eta)-1}
    \beta(\tfrac{n}{\ell_N})
    ,
    \\
\label{g}
    \mathbf{g}_{\beta,\ell_N}(\eta)
    &=
    \frac{1}{\ell_N+1}
    \sum_{j=1}^{\ell_N}
    \sum_{i=1}^{j}
    \tau^{-i}\mathbf{J}_{\beta,\ell_N}^j(\eta),
\end{align}

\end{Lemma}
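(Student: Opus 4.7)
My plan is to leverage the linear decomposition \eqref{gen:decomp} and invoke the gradient representation of each elementary Bernstein generator $\mathcal{B}^{n,\ell_N}$ established in \cite[Proposition 2.7]{N24}. The argument naturally splits into two parts.

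\textbf{First}, I would verify the explicit form \eqref{curr-j} of the algebraic current by computing $\mathcal{L}^{\beta,\ell_N}[\pi_0]$ directly from \eqref{gen:r}. Only the bonds $\{0,1\}$ and $\{-1,0\}$ contribute, since $\nabla_{x,x+1}[\pi_0]$ vanishes otherwise. A short reduction on $\{0,1\}$-valued configurations shows that the factor $(\mathbf{e}_{0,1}+\mathbf{e}_{1,0})(\eta)(\eta(1)-\eta(0))$ collapses to $\eta(1)-\eta(0)$, and that $(\mathbf{e}_{0,1}-\mathbf{e}_{1,0})(\eta)=\eta(0)-\eta(1)$. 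Consequently, the contribution at $\{0,1\}$ equals $\mathbf{c}_{\beta,\ell_N}(\eta)(\eta(1)-\eta(0))=-\mathbf{J}_{\beta,\ell_N}(\eta)$ with $\mathbf{J}_{\beta,\ell_N}$ given by the announced formula, while the analogous computation at $\{-1,0\}$ yields $\tau^{-1}\mathbf{J}_{\beta,\ell_N}(\eta)$. Summing the two produces $\mathcal{L}^{\beta,\ell_N}[\pi_0]=\nabla\tau^{-1}[-\mathbf{J}_{\beta,\ell_N}]$, as required.

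\textbf{Second}, for the gradient representation, I would use the linearity of \eqref{gen:decomp} together with \cite[Proposition 2.7]{N24}, which provides, for each $0\leq n\leq \ell_N$, a gradient identity $\mathbf{J}^{n,\ell_N}=-\nabla\mathbf{H}^{n,\ell_N}$ with $\mathbf{H}^{n,\ell_N}=\mathbf{h}^{n,\ell_N}+\mathbf{g}^{n,\ell_N}$, where $\mathbf{h}^{n,\ell_N}(\eta)$ is essentially the normalised indicator of $\{P_{\ell_N}(\eta)>n\}$. Setting
\[
\mathbf{H}_{\beta,\ell_N}\;=\;\sum_{n=0}^{\ell_N}\beta\!\bigl(\tfrac{n}{\ell_N}\bigr)\,\mathbf{H}^{n,\ell_N},
\]
the linearity of $\nabla$ gives $\mathbf{J}_{\beta,\ell_N}=-\nabla\mathbf{H}_{\beta,\ell_N}$. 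Exchanging the order of summation collapses $\sum_n \beta(n/\ell_N)\mathbf{1}\{P_{\ell_N}(\eta)>n\}$ into $\sum_{n=0}^{P_{\ell_N}(\eta)-1}\beta(n/\ell_N)$, producing \eqref{h}; the $\mathbf{g}$-part follows term-by-term by substituting the expression of $\mathbf{J}_\beta^j$ from the first part into the analogous formula for $\mathbf{g}^{n,\ell_N}$ in \cite[Proposition 2.7]{N24}.

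The main technical obstacle I anticipate is the careful bookkeeping of the shift operators $\tau^{-i}$ and of the window indices $W_j$, so that the telescoping encoded in \eqref{g} aligns, bond by bond, with the discrete gradient of $\mathbf{h}_{\beta,\ell_N}$. No genuinely new ideas are required beyond those already contained in \cite[Proposition 2.7]{N24}; the content of the lemma is essentially the bookkeeping that a linear combination of gradient models in the Bernstein basis remains gradient, with potential obtained by the same linear combination.
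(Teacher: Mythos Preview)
Your proposal is correct and follows essentially the same approach as the paper: the paper states that the lemma ``is direct given the expression in \cite[Proposition 2.7]{N24}'' combined with the linear decomposition \eqref{gen:decomp}, which is precisely your second step, while your first step merely spells out the standard current computation that the paper leaves implicit. The only additional content you provide is the explicit exchange-of-summation yielding \eqref{h}, which the paper also records (in the alternative form \eqref{h:alt}) by pointing to the last displays in the proof of \cite[Proposition 2.7]{N24}.
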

\begin{Rem}
    It will be convenient to note an equivalent expression for $\mathbf{h}_{\beta,\ell_N}$. Let $\ndef{\mathcal{P}_{n}}(A)$ be the collection of subsets of some set $A\subset\T_N$ with exactly $n$ elements. One can then express (see the last two displays in the proof of \cite[Proposition 2.7]{N24} for the precise computations) 
\begin{align}\label{h:alt}
    \mathbf{h}_{\beta,\ell_N}(\eta)
     &=
    \frac{1}{\ell_N+1}
    \sum_{n=0}^{\ell_N}
    \beta(\tfrac{n}{\ell_N})
    \sum_{i=n}^{\ell_N}
    \sum_{
    P\in\mathcal{P}_{i-n}(\llbracket0,i-1\rrbracket)
    }
    \prod_{p\in P}
    (1-\eta(p))
    \prod_{q\in \llbracket0,i\rrbracket\backslash P}
    \eta(q)
    .
\end{align}    
\end{Rem}
\subsection{Topological setting}\label{subsec:analysis}
We now present the analysis context of this work. Fixed a finite \textit{time-horizon} $\ndef{[0,T]}$, let $\ndef{\mu_N}$ be an initial probability measure on $\Omega_N$, and let $\{\eta_{N^2t}\}_{t\geq 0}$ be the process generated by $\mathfrak{L}_{N}$ in Definition \ref{def:model}. Our main object of study is the \textit{empirical measure}, $ \pi^N $, the random measure given by 
\begin{align*}
    \pi^N(\cdot,\mathrm{d}u)
    =\frac1N \sum_{x\in\T_N}
    \delta_{\tfrac{x}{N}}
    (\mathrm{d}u)\pi_x(\cdot)
    ,
\end{align*}
where $ \ndef{\delta_{v}} $ is the Dirac measure at $v\in\mathbb{T}$. For each $\eta\in\Omega_N$ fixed, its (diffusive) time evolution is defined as $ \ndef{\pi^N_t}(\eta,\rmd u)=\pi^N(\eta_{N^2t},\rmd u)$;  and for any function $ G:\mathbb{T}\to\mathbb{R} $ we shorten the integral of $ G $ with respect to the empirical measure as
\begin{align}\label{int:emp}
		\pi_t^N(\eta,G)
		=\int_{\mathbb{T}}G(u)\pi_t^N(\eta,\rmd u)
		.
\end{align}

Denote by $ \ndef{\mathcal{M}_+} $ the space of positive measures on $ [0,1] $ with total mass at most $ 1 $ and endowed with the weak topology. The Skorokhod space of trajectories induced by $ \{\eta_{N^2t}\}_{t\in[0,T]} $ with initial measure $ \mu_N $ is denoted by $ \ndef{\mathcal{D}([0,T],\Omega_N)} $, and we denote by $ \ndef{\mathbb{P}_{\mu_N}} $ the induced probability measure on it. Moreover, $ \ndef{\mathbb{Q}_N}:=\mathbb{P}_{\mu_N}\circ(\pi^N)^{-1} $ is the probability measure on $ \mathcal{D}([0,T],\mathcal{M}_+) $ induced by $ \{\pi^N_t\}_{t\in[0,T]} $ and $ \mu_N $.

For $ p\in\mathbb{N}_+\cup\{\infty\} $, let $ \ndef{C^p}(\mathbb{T}) $ be the set of $ p $ times continuously differentiable, real-valued functions defined on $ \mathbb{T} $; and let $ \ndef{C^{q,p}}(\mathbb{T}\times [0,T]) $ be the set of all real-valued functions defined on $ \mathbb{T}\times [0,T] $ that are $ q $ times differentiable on the space variables, in $\T$, and $p$ times differentiable on the time variable, in $[0,T]$, with continuous derivatives. For $f,g\in L^2(\mathbb{T})$, we denote by $\ndef{\langle f,g\rangle}$ their standard Euclidean product in $L^2(\mathbb{T})$ and $\ndef{\|\cdot\|_{2}}$ its induced norm.

We now aim to introduce the relevant weak formulation of the hydrodynamic equation. To that end, for any pair $ G,H\in C^\infty(\mathbb{T}^d) $ let $ \ndef{\inner{G,H}_1}=\inner{\partial_uG,\partial_u H}$ be their semi inner-product on $C^{\infty}(\mathbb{T})$, and $ \ndef{\norm{\cdot}_1} $ its associated semi-norm. The space $ \ndef{\mathcal{H}^1}(\mathbb{T}) $ is the Sobolev space on $ \mathbb{T} $, defined as the completion of $ C^\infty(\mathbb{T}) $ for the norm $ \ndef{\norm{\cdot}_{\mathcal{H}^1(\mathbb{T})}^2}:=\norm{\cdot}^2_{L^2}+\norm{\cdot}_{1}^2 $. We write as $ \ndef{L^2([0,T];\mathcal{H}^1(\mathbb{T}))} $ the set of measurable functions $ f:[0,T]\to\mathcal{H}^1(\mathbb{T}) $ such that $ \int_0^T\norm{f_s}^2_{\mathcal{H}^1(\mathbb{T})}\rmd s<\infty $.

The next definition corresponds to the notion of weak solution considered here.

\begin{Def}\label{def:weak}
    For $ \rho^{\rm ini}:\mathbb{T}^d\to[0,1] $ a measurable function, we say that $ \rho:[0,T]\times \mathbb{T}\mapsto [0,1] $ is a weak solution of the equation
        \begin{align}\label{hydro-eq}
		\begin{cases}
		    \partial_t\rho
            =\partial_u^2\Phi(\rho), & \text{in } (0,T]\times\T^d,\\
            \rho_0=\rho^{\text{ini}},& \text{in }\mathbb{T}^d
		\end{cases} 
	\end{align}
    if
	\begin{enumerate}
    \item $\Phi(\rho)\in L^2([0,T];\mathcal{H}^1(\mathbb{T}^d))$;
	\item for any $ t\in[0,T] $ and $ G\in C^{1,2}([0,T]\times \mathbb{T}) $, $\rho$ satisfies the formulation $\mathfrak{F}_t(\rho^{\rm ini},\rho,G)=0$, where   
		\begin{equation}\label{weak}
			\mathfrak{F}_t(\rho^{\rm ini},\rho,G)
			:=
			\inner{\rho_t,G_t}-\inner{\rho^{\rm ini},G_0}
			-\int_0^t
			\bigg\{
			\inner{\rho_s,\partial_sG_s}
			+
                \inner{\Phi(\rho_s),\partial_u^2 G_s}
			\bigg\}
			\rmd s
			.
		\end{equation}
	\end{enumerate}
\end{Def}

The uniqueness of solutions of the weak formulation in last definition is consequence of the regularity in (1). This is very straightforwardly shown via Oleinik's method (see, for instance, \cite[Lemma B.3]{s2f}).

\subsection{Main Result}\label{subsec:hl}
The hydrodynamic limit establishes a weak law of large numbers: starting from a \textit{local equilibrium distribution}, the empirical measure converges weakly to an absolutely continuous measure, whose density is the unique solution of the hydrodynamic equation \eqref{hydro-eq}. Note that, indirectly, this establishes the existence of solutions of \eqref{hydro-eq} in the weak sense of Definition \ref{def:weak}. We aim to present this statement rigorously.
\begin{Def}[Local equilibrium distribution]\label{def:ass}
	Let $ \{\mu_N\}_{N\geq 1} $ be a sequence of probability measures on $ \Omega_N $, and let $g:\mathbb{T}\to[0,1] $ be a measurable function. If, for any continuous function $ G:\mathbb{T}\to\mathbb{R} $ and every $ \delta>0 $, it holds
	\begin{align*}
		\lim_{N\to+\infty}\mu_N
		\left(
		\eta\in\Omega_N
		:\big|\pi^N(\eta,G)-\inner{g,G}\big|>\delta
		\right)
		=0,
	\end{align*}
	we say that the sequence $ \{\mu_N\}_{N\geq 1} $ is a local equilibrium measure associated with the profile $ g $.
\end{Def}
A simple example of local equilibrium measures are the Bernoulli product measures with, for example, a continuous parameter-function. We now state the main theorem.
\begin{Th}[Hydrodynamic limit] \label{th:hydro}
	Let $ \rho^{\rm ini}:\mathbb{T}\to[0,1] $ be a measurable function and let $ \{\mu_N\}_{N\geq 1} $ be a local equilibrium measure associated with it.
	Then, for any $ t\in [0,T] $ and $ \delta>0 $, it holds
	\begin{align*}
		\lim_{N\to+\infty}
		\mathbb{P}_{\mu_N}
		\left(
		\big|\pi_t^N(\eta,G)-\inner{\rho_t,G}\big|>\delta
		\right)=0,
	\end{align*}
	where $ \rho $ is the unique solution of the diffusion equation \eqref{hydro-eq}, in the sense of Definition \ref{def:weak}, with initial data $\rho^{\rm ini}$ and 
    \begin{align}
        \Phi(\rho)=\int_0^\rho\beta(u)\rmd u
        .
    \end{align}
\end{Th}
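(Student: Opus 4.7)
The plan follows the entropy method of \cite{GPV}: tightness of the laws $\{\mathbb{Q}_N\}$ in $\mathcal{D}([0,T],\mathcal{M}_+)$; identification of limit points as supported on weak solutions of \eqref{hydro-eq}; an energy estimate giving $\Phi(\rho)\in L^2([0,T];\mathcal{H}^1(\T))$; and uniqueness via the Oleinik argument already referenced after Definition \ref{def:weak}. Tightness is standard since $\mathbf{c}_{\beta,\ell_N}\le 1$ gives uniformly bounded rates, so an Aldous--Rebolledo criterion applies as in \cite{N25}. The initial condition is inherited from the local equilibrium hypothesis of Definition \ref{def:ass}, while the energy estimate is a Dirichlet-form computation matched against the gradient representation $\mathbf{J}_{\beta,\ell_N}=-\nabla\mathbf{H}_{\beta,\ell_N}$, once more following \cite{N25} and the auxiliary estimates of Appendix \ref{app:replacement}.

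The heart of the proof is the characterization of limit points. I would fix $G\in C^{1,2}([0,T]\times\T)$, apply Dynkin's formula to $\pi_t^N(\eta,G_t)$, and bound the resulting martingale in $L^2$ via the boundedness of the rates. Using Lemma \ref{lem:grad} together with two discrete summations by parts, the drift rewrites as
\begin{align*}
    \int_0^t\frac{1}{N}\sum_{x\in\T_N}\Delta_N G_s(x/N)\,\tau^x\mathbf{H}_{\beta,\ell_N}(\eta_{sN^2})\,\rmd s \;+\; O(N^{-1}),
\end{align*}
where $\Delta_N$ denotes the discrete Laplacian applied to $G$. The contribution of $\mathbf{g}_{\beta,\ell_N}$ from \eqref{g} telescopes after a third summation by parts, producing a prefactor $1/(\ell_N+1)$ that sends it to zero; I expect this to be formalized as one of the replacement lemmas of Section \ref{sec:rep-lem}. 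What remains is to replace $\tau^x\mathbf{h}_{\beta,\ell_N}(\eta_{sN^2})$ by $\Phi$ evaluated at a mesoscopic density. Expression \eqref{h} already exhibits $\mathbf{h}_{\beta,\ell_N}(\eta)$ as a Riemann sum of mesh $1/\ell_N$ approximating $\Phi(\inner{\eta}_0)$, so the approximation error is governed solely by the uniform continuity modulus of $\beta$ on $[0,1]$ --- this is what Lemma \ref{lem:h-treat} and its averaged form Lemma \ref{lem:rep-h} are tailored to deliver. After this replacement, the classical one-block and two-blocks estimates, obtained from the entropy inequality as in Appendix \ref{app:replacement}, transfer $\Phi(\inner{\eta_{sN^2}}_0)$ to $\Phi$ applied to the empirical average over a macroscopic box of radius $\epsilon N$; continuity of $\Phi$ then lets me pass $N\to\infty$ along a convergent subsequence and finally $\epsilon\downarrow 0$, yielding the weak formulation \eqref{weak} for the limit density $\rho$.

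The main obstacle, as flagged in the introduction, is precisely the reduction of $\mathbf{h}_{\beta,\ell_N}$ to $\Phi(\inner{\eta}_0)$. Unlike in \cite{N25}, where $\beta$ was polynomial in the mesoscopic density and admitted Lipschitz estimates on compact sets, here no quantitative modulus of continuity is available and one must close the term purely from the uniform continuity of $\beta$ on $[0,1]$ combined with the Riemann-sum structure gained by averaging over the windows $\{W_j\}_{j=0}^{\ell_N}$. The qualitative nature of this modulus is absorbed by the mesoscopic scale $\ell_N\to\infty$, which is the mechanism that Lemmas \ref{lem:h-treat} and \ref{lem:rep-h} are designed to exploit; together with the standard entropy-method ingredients above, they will deliver Theorem \ref{th:hydro}.
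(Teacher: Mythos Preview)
Your high-level plan (entropy method, Dynkin, gradient decomposition, replacement lemmas, energy estimate, Oleinik uniqueness) matches the paper exactly, but you mislocate the main technical difficulty. The passage from $\mathbf{h}_{\beta,\ell_N}(\eta)$ to $\Phi(\inner{\eta}_{\ell_N+1})$ via the Riemann-sum reading of \eqref{h} is in fact elementary: the error is uniformly bounded by the modulus of continuity $\omega_\beta(1/\ell_N)$ and no replacement lemma is needed. What Lemmas \ref{lem:h-treat} and \ref{lem:rep-h} are actually tailored to deliver is the \emph{nonlinear} block replacement --- comparing $\Phi$ (or $\mathbf{h}$) at two different mesoscopic scales --- and this is precisely the step you dismiss as ``the classical one-block and two-blocks estimates of Appendix~\ref{app:replacement}''. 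Those corollaries are stated for \emph{linear} expressions $\varphi_x(\eta)(\eta(w)-\eta(z))$ with $\varphi_x$ independent of the exchanged sites; they do not apply to $\Phi(\inner{\eta}_{\ell_N})-\Phi(\inner{\eta}_{\epsilon N})$. The paper's mechanism is to first Bernstein-approximate $\beta$ by a polynomial of fixed degree $M$, so that the antiderivatives $\text{H}_{m,M}$ are polynomials whose divided differences $V_{m,M}$ are $C^1$; this is what makes the integration-by-parts term \eqref{split2} controllable by $\norm{\partial_2 V_{m,M}}_\infty/(L+1)$, see \eqref{V-reg}. Without this smoothing step the divided difference $(\Phi(u)-\Phi(v))/(u-v)=\int_0^1\beta(tu+(1-t)v)\,\rmd t$ is merely continuous, and the argument does not close as written. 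The paper also routes through an intermediate \emph{fixed} scale $L$, so that $\mathbf{h}_{\beta,L}$ is a genuine local function admitting the product decomposition \eqref{h:alt}--\eqref{reorg}, whereas you jump directly from scale $\ell_N$ to scale $\epsilon N$.

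Two smaller points. Your treatment of $\mathbf{g}_{\beta,\ell_N}$ is off: it does not telescope to a prefactor $1/(\ell_N+1)$ --- indeed $|\mathbf{g}_{\beta,\ell_N}|_\infty$ is of order $\ell_N$, not $o(1)$. The paper disposes of it via a Dirichlet-form estimate (Lemma~\ref{lem:g-fun}), exploiting that $\mathbf{g}$ is a sum of currents and can therefore be absorbed into $\mathfrak{D}_N(\sqrt{f})$ after a path argument. Finally, the ``Riemann-sum structure gained by averaging over the windows $\{W_j\}$'' you invoke pertains to the constraint $\mathbf{c}_{\beta,\ell_N}$, not to the potential $\mathbf{h}_{\beta,\ell_N}$; the latter depends on a single box $\llbracket 0,\ell_N\rrbracket$ through $P_{\ell_N}(\eta)$ only.
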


\section{Proof of the Hydrodynamic Limit}\label{sec:hl}
In order to prove Theorem \ref{th:hydro}, we apply the entropy method, first introduced in \cite{entropy}. We start by presenting the general scheme of the proof, where we discuss the tightness of the measure and regularity of the density. Next, we proceed with the characterization of the limit measure of the process.

\subsection{General scheme}\label{subsec:scheme}
The link between our process and the weak formulation \eqref{weak} is given through Dynkin's martingale (see \cite[Appendix 1, Lemma 5.1]{KL:book}),
\begin{align}\label{eq:M}
	\text{M}_N^G(t)
	:=\pi_t^N(\eta,G_t)-\pi_0^N(\eta,G_0)
	&-\int_0^t
	\pi_s^N(\eta,\partial_sG_s)+\mathfrak{L}_N[\pi_s^N](\eta,G_s)\rmd s
\end{align}
for $G\in C^{2,1}(\mathbb{T}\times [0,T])$.

One can show that the sequence of probability measures $ (\mathbb{Q}_N)_{N\in\mathbb{N}} $ is tight with respect to the Skorokhod topology of $ \mathcal{D}\left([0,T],\mathcal{M}_+\right) $ by following \cite[Subsection 3.2]{N25}, which invokes Aldous' conditions. As evidenced in \cite[Proposition 3.1]{s2f} and \cite[Subsection 3.2]{N25}, tightness for exclusion processes in the torus satisfying the gradient condition is consequence of the following: \textbf{\textit{(i)}} $|\mathbf{r}_{\beta,\ell_N}|_{\infty}=o(N)$ ; \textbf{\textit{(ii)}} $|\mathbf{h}_{\beta,\ell_N}|_{\infty}$ being uniformly bounded from above by a constant independent of $N$; \textbf{\textit{(iii)}} and the estimate in Lemma \ref{lem:g-fun} vanishing as $N\to\infty$. The former two are straightforwardly identified, therefore we refer the reader to the aforementioned references for the specific details to include them in the proof.  

With this, we conclude that the sequence of empirical measures is tight, existing then weakly convergent subsequences. Since there is at most one particle per site, one can show that the limiting measure of a convergent subsequence of $(\mathbb{Q}_N)_{N\geq 0}$, that we write as \ndef{$\mathbb{Q}$}, is concentrated on paths of absolutely continuous measures with respect to the Lebesgue measure. This means precisely that the sequence $ (\pi_\cdot^{N}(\eta,\rmd u))_{N\in\mathbb{N}} $ converges weakly, with respect to $ \mathbb{Q}_N $, to an absolutely continuous measure with a density that we write as \ndef{$\rho$}, that is, $ \pi_{\cdot}(\rmd u)=\rho_\cdot(u)\rmd u $. 

Provided the aforementioned convergence by subsequences and absolutely continuity of the limit measure, one then argues that the density $\rho$ satisfies the notion of weak solution in Definition \ref{def:weak}. This encompasses two results to be held $\mathbb{Q}$ almost-surely: \textit{\textbf{(i)}} satisfaction of the weak formulation $\mathfrak{F}_t(\rho^{\text{ini}},\rho,G)=0$; and \textit{\textbf{(ii)}} regularity, as in (1) in Definition \ref{def:weak}. The latter is typically shown via the \textit{so-called} "energy estimate". This is standard in the literature, and shown to hold for models with properties satisfied by the model in the present text. Concretely, proved the "replacement lemmas" (here stated in Corollary \ref{lem:1block} and \ref{lem:2block}), and the vanishing, as $N\to\infty$, of the remainder term in the gradient property, \eqref{g}, as in Lemma \ref{lem:g-fun}, the regularity property (1) is consequence of $\mathbf{c}_{\beta,\ell_N}$ being uniformly bounded from above by a constant independent of $N$. For the precise computations, we refer the reader to \cite[Section 5]{s2f}, where this was shown for the "Interpolating process" in the slow-diffusion regime; or \cite[Section 5 -- Regime I]{N25}. The latter is closer, in terms of notation and exposition, to the present text. The next step is to characterize the limit $\mathbb{Q}$.

\subsection{Characterization of the limiting points}
We now focus on showing that for any limit point $ \mathbb{Q} $ of a convergent subsequence of $(\mathbb{Q}_N)_{N\in\mathbb{N}} $ it holds 
\begin{align}\label{eq:char}
        \mathbb{Q}
	\big(\pi_{\cdot} 
        :\;
        \mathfrak{F}_t(\rho^{\rm ini},\rho,G) =0 
        ,\;
            \forall t\in[0,T],\;
        \forall G\in C^{1,2}([0,T]\times\mathbb T)
        \;\big|\;
        \tfrac{\pi(\rmd u)}{\rmd u}=\rho
        \big)
	=1
    ,
\end{align}
where we recall that $\mathfrak{F}_t(\rho^{\rm ini},\rho,G)$ is given in  \eqref{weak}. The next steps up to \eqref{h-to-rep} are presented in detail in \cite[Proof of Proposition 3.2]{N25}, from the start of the proof up to equation (3.12) in the aforementioned work. Here, we only overview the approach and present the main novelties.

At this point is necessary to introduce a discretization of $\Phi$. Fix $\alpha\in(0,1)$ and denote by $\ndef{\nu_\alpha^N}$ the Bernoulli product measure parametrised by $\alpha$, that is, defined by the marginals $\nu_\alpha^N(\eta\in\Omega_N\;:\;\eta(x)=1)=\alpha$. Introduce the function $\ndef{\Phi_{\beta,L}}:[0,1]\to\mathbb{R}_+\cup\{0\}$, for each $L\in\mathbb{N}_+$ fixed, through
\begin{align}\label{phiL}
	\Phi_{\beta,L}(\alpha)
	:=\text{E}_{\nu_{\alpha}^{N}}[\mathbf{h}_{L}]
	,
\end{align}
Explicitly,
\begin{align}
    \Phi_{\beta,L}(u)
    =\sum_{n=0}^L\beta(\tfrac{n}{L})
    \text{H}_{n,L}(u)
    \quad\text{with}\quad
    \text{H}_{n,L}(u)
    =\frac{1}{L+1}
    \sum_{i=n}^{L}
    \binom{i}{n}u^{n+1}(1-u)^{i-n}
    .
\end{align}
From Weierstrass' approximation theorem, recalling \eqref{binom} and noticing that $\Phi_L'=\text{B}_{\beta,L}$, the sequence $(\Phi_{\beta,L})_{L\geq 0}$ converges uniformly to 
\begin{align}
        \Phi_\beta(\rho)=\int_0^\rho\beta(u)\rmd u
        .
\end{align}
Moreover, from Lemma \ref{lem:grad} and the expression in \eqref{g}, $\text{E}_{\nu_{\alpha}^{N}}[\mathbf{H}_{\beta,\ell_N}] = \text{E}_{\nu_{\alpha}^{N}}[\mathbf{h}_{\beta,\ell_N}]$.

Let us now introduce the functional $\ndef{\tilde{\Phi}_{\beta,L}}$, corresponding $\tilde{\Phi}_{\beta,L}[f]$, for each $f:\T_N\to\mathbb{R}$, to the expression resulting from replacing $\eta(x)$ by $f(x)$, for each $x\in\T_N$ in the expression of $\mathbf{h}_{\beta,L}(\eta)$ in \eqref{h}. The object $\tilde{\Phi}_{\beta,L}$ is a natural linearization of $\Phi_{\beta,L}$ provided by the dynamics, in the sense that $\Tilde{\Phi}_{\beta,L}[\rho_s((\cdot)/N)]$ is linear on any element of $\{\rho(x/N)\}_{x\in\T_N}$. 

Fixed $ \eps>0 $ and $ u\in\T $, define the cut-off function \ndef{$\iota_\eps^{u}$} through
\begin{align}
    {\iota}_\eps^{u}(v)
    :=
    \frac{1}{\eps}\mathbf{1}_{B_{\eps}(u)}(v), 
\end{align}
for each $v\in\T$, where $\ndef{B_\eps}(u)=[u,u+\eps)$, and where $\ndef{\mathbf{1}_A}(v)=1$ if $v\in A$ and zero otherwise. Shortening $\ndef{\Psi_u^{\eps,L}}[\pi_s]\equiv\Tilde{\Phi}_{\beta,L}[\pi_s(\iota_\eps^{u+(\cdot)\eps})]$, from Lebesgue's differentiation theorem one can show that 
\begin{align}
    \lim_{L\to+\infty}\lim_{\eps\to0}
    \big|\Phi_L(\rho_s(u))
    -\Psi_u^{\eps,L}[\pi_s]\big|
    =0,
\end{align}
reducing the proof of \eqref{eq:char} to showing that, for arbitrary $\delta>0$,
\begin{align}
    \limsup_{L\to\infty}
    \lim_{\eps\to0}
\mathbb{Q}
\bigg(\pi_\cdot:\;
    \sup_{t\in[0,T]}
    \bigg|
    \pi_t(G_t)-\pi_0(G_0)
    -\int_0^t
    \pi_s(\partial_sG_s)
    \rmd s
\\-
    \int_0^t
    \int_{\T}
    \partial_{u}^2 G_s(u)
    \Tilde{\Phi}_{\beta,L}[\pi_s(\iota_\eps^{u+(\cdot)\eps})]
    \rmd u\rmd s
    \bigg|>\delta
\bigg)=0.
\end{align}

It is now convenient to introduce the discrete Laplacian operator. For each $F:\T\to\mathbb{R}$, let $\ndef{\Delta_N}$ be defined through $\Delta_NF(u)=N^2[F(u-\tfrac1N)-2F(u)+F(u+\tfrac1N)]$, for any $u\in \T$. It is also important to see that from the gradient property, that is, Lemma \ref{lem:grad}, it holds 
\begin{align}
    \mathfrak{L}_N[\pi_x]=N^2\Delta\tau^x\mathbf{H}_{\beta,\ell_N}.
\end{align}

With the above in mind, from Portmanteau's theorem and then several times the Markov and triangle's inequality, we are reduced to the analysis of the limits $\limsup_{L\to+\infty}\lim_{\eps\to0}\limsup_{N\to+\infty}$ of each of the following terms
\begin{align}
&\mathbb{P}_{\mu_N}
\bigg(
\sup_{t\in[0,T]}
\bigg|
\text{M}_N^G(t)
\bigg|
>\delta
\bigg)
,
\\&
\mathbb{P}_{\mu_N}
\bigg(
\sup_{t\in[0,T]}
\bigg|
    \int_0^t
        \frac{1}{N}
        \sum_{x\in\T_N}
		\Delta_N G_s(\tfrac{x}{N})
            \mathbf{g}_{\beta,\ell_N}
            (\tau^x\eta_{N^2 s})
    \rmd s
\bigg|>\delta
\bigg)
,\\\label{h-cont}
&
\mathbb{P}_{\mu_N}
\bigg(
\sup_{t\in[0,T]}
\bigg|
\int_0^t
    \frac1N 
    \sum_{x\in\T_N}
    \Delta_NG(s,\tfrac{x}{N})
\bigg\{
\mathbf{h}_{\beta,\ell_N}(\tau^x\eta_{N^2s})
-
\mathbf{h}_{\beta,L}(\tau^x\eta_{N^2s})
\bigg\}
\rmd s
\bigg|
>\delta
\bigg)
,\\\label{h-to-rep}
&
\mathbb{P}_{\mu_N}
\bigg(
\sup_{t\in[0,T]}
\bigg|
    \int_0^t
        \frac{1}{N} 
        \sum_{x\in\T_N}
            \Delta_N G_s(\tfrac{x}{N})
            \bigg\{
            \Tilde{\Phi}_{\beta,L}[\pi_s(\eta,\iota_\eps^{\tfrac{x}{N}+(\cdot)\eps})]
                -
                \mathbf{h}_{\beta,L}(\tau^x\eta_{N^2s})
            \bigg\}
    \rmd s
\bigg|
>\delta
\bigg)
,\\
&
\mathbb{P}_{\mu_N}
\bigg(
\sup_{t\in[0,T]}
\bigg|
\int_0^t
\int_{\T}
    \partial_u^2 G_s(u)
    \Tilde{\Phi}_{\beta,L}[\pi_s(\eta,\iota_\eps^{u+(\cdot)\eps})]   
    \rmd u
\rmd s
\\&
\qquad\qquad\qquad
-\int_0^t
    \frac1N
    \sum_{x\in\T_N}
    \Delta_NG_s(\tfrac{x}{N})
    \Tilde{\Phi}_{\beta,L}[\pi_s^N(\eta,\iota_\eps^{\tfrac{x}{N}+(\cdot)\eps})]
\rmd s
\bigg|
>\delta
\bigg).
\end{align}
We analyse each of the terms above:
\begin{itemize}
    \item After an application of Doob's inequality, one can see that the first term vanishes in the limit $N\to+\infty$ by repeating the computations to show the first equality in \cite[Equation (3.2)]{N25}, where $|\mathbf{r}_{\beta,\ell_N}|_\infty=o(N)$ is required;

    \item For the second, one applies \cite[Proposition 4.3]{N25} and then Lemma \ref{lem:g-fun};

    \item The last term is analysed by performing a Taylor expansion, then approximating the integral by a Riemann sum. 
\end{itemize}
In order to conclude the proof, we are left with analysing \eqref{h-cont} and \eqref{h-to-rep}, which is where novel arguments must be introduced.

Let us focus on \eqref{h-cont}, treated in the forthcoming Lemma \ref{lem:h-treat} and recall $\mathbf{h}_{\beta,\ell_N}$ as in \eqref{h}. From here on, for each $l\in\mathbb{N}_+$ consider the ball $\ndef{B_{l}}=\llbracket 0,l-1\rrbracket$, and shorten for any $\eta\in\Omega_N$ the density $\ndef{\inner{\eta}_l}$ as $\inner{\eta}_{l}:=l^{-1}\sum_{y\in B_{l}}\eta(y)$.   
\begin{Lemma}\label{lem:h-treat}
For each fixed integer $L>0$ it holds that 
    \begin{multline}
        \limsup_{L\to+\infty}\limsup_{\eps\to0}\limsup_{N\to+\infty}        \mathbb{P}_{\mu_N}
        \bigg(
        \sup_{t\in[0,T]}
        \bigg|
        \int_0^t
        \frac1N 
        \sum_{x\in\T_N}
        \Delta_NG_s(\tfrac{x}{N})
        \times\\\times\bigg\{
        \mathbf{h}_{\beta,\ell_N}(\tau^x\eta_{N^2s})
        -
        \mathbf{h}_{\beta,L}(\tau^x\eta_{N^2s})
        \bigg\}
        \rmd s
        \bigg|
        >\delta
        \bigg)
        =0
        .
    \end{multline}
\end{Lemma}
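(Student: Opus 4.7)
The plan is to insert $\Phi_\beta$ evaluated at empirical densities of appropriate boxes as intermediate terms between $\mathbf{h}_{\beta,\ell_N}$ and $\mathbf{h}_{\beta,L}$, thereby separating a deterministic Riemann-sum approximation from a two-block-type comparison of density averages over boxes of different sizes. By Markov's inequality and the uniform bound on $\Delta_N G_s$, the problem reduces to showing
\begin{align}
\limsup_{L\to\infty}\limsup_{\eps\to 0}\limsup_{N\to\infty}\mathbb{E}_{\mu_N}\bigg[\int_0^T \tfrac{1}{N}\sum_{x\in\T_N}\big|\mathbf{h}_{\beta,\ell_N}(\tau^x\eta_{N^2s}) - \mathbf{h}_{\beta,L}(\tau^x\eta_{N^2s})\big|\,\rmd s\bigg]=0,
\end{align}
and a triangle inequality splits the integrand into $A_1+A_2+A_3$ with
\begin{align}
A_1 &= |\mathbf{h}_{\beta,\ell_N}(\tau^x\eta) - \Phi_\beta(\inner{\tau^x\eta}_{\ell_N+1})|,\\
A_2 &= |\Phi_\beta(\inner{\tau^x\eta}_{\ell_N+1}) - \Phi_\beta(\inner{\tau^x\eta}_{L+1})|,\\
A_3 &= |\Phi_\beta(\inner{\tau^x\eta}_{L+1}) - \mathbf{h}_{\beta,L}(\tau^x\eta)|.
\end{align}

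For $A_1$ and $A_3$, the explicit representation \eqref{h} expresses $\mathbf{h}_{\beta,M}(\eta)$ as essentially a left Riemann sum with mesh $1/M$ for $\Phi_\beta$ at the argument $P_M(\eta)/(M+1) = \inner{\eta}_{M+1}$. Since $\beta$ is continuous on the compact $[0,1]$ and hence uniformly continuous with modulus $\omega_\beta$, a direct calculation yields the deterministic bound $|\mathbf{h}_{\beta,M}(\eta) - \Phi_\beta(\inner{\eta}_{M+1})| \le C\bigl(\omega_\beta(1/M) + 1/M\bigr)$ uniformly in $\eta$. Applied with $M=\ell_N$ this controls $A_1$ (vanishing as $N\to\infty$ since $\ell_N\to\infty$); with $M=L$, it controls $A_3$ (vanishing as $L\to\infty$).

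For $A_2$, the Lipschitz bound $|\Phi_\beta(r) - \Phi_\beta(r')| \le \|\beta\|_\infty |r-r'|$ coming from $\Phi_\beta' = \beta$ reduces matters to controlling $|\inner{\tau^x\eta}_{\ell_N+1} - \inner{\tau^x\eta}_{L+1}|$ after space-time averaging. Inserting $\inner{\tau^x\eta}_{\eps N}$ as intermediate term, Corollary \ref{lem:2block} applied to the cylinder $\eta(0)$ gives $|\inner{\tau^x\eta}_{L+1}-\inner{\tau^x\eta}_{\eps N}|\to 0$ on average as $N\to\infty$ then $\eps\to 0$, and the analogous estimate with $\ell_N+1 = o(N) \ll \eps N$ in place of $L+1$ closes the argument. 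The main obstacle is precisely this second density comparison: the standard cylinder-function formulation of the two-block estimate does not apply to $\inner{\cdot}_{\ell_N+1}$ because the box size depends on $N$, which is the "non-uniform continuity" flagged in the introduction. My approach sidesteps this by first extracting the density out of $\mathbf{h}_{\beta,\ell_N}$, reducing the replacement step to a one-site cylinder $\eta(0)$ for which Corollary \ref{lem:2block} applies in its standard form.
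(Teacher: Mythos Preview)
The deterministic estimates for $A_1$ and $A_3$ are correct, but the treatment of $A_2$ has a genuine gap. By pulling the absolute value inside the time integral and the spatial sum at the outset, you are left with controlling
\[
\mathbb{E}_{\mu_N}\Big[\int_0^T\tfrac{1}{N}\sum_{x\in\T_N}\big|\inner{\tau^x\eta}_{\ell_N+1}-\inner{\tau^x\eta}_{L+1}\big|\,\rmd s\Big],
\]
and Corollary~\ref{lem:2block} does \emph{not} provide this: its conclusion is of the form $\mathbb{E}_{\mu_N}\big[\big|\int_0^t\tfrac{1}{N}\sum_x\varphi_x(\inner{\cdot}_\ell-\inner{\cdot}_L)\,\rmd s\big|\big]$, with the modulus \emph{outside} the space--time average. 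The entropy--Dirichlet variational machinery behind all the replacement lemmas of Section~\ref{sec:rep-lem} and Appendix~\ref{app:replacement} requires a signed integrand against an exterior test function in order to perform the exchange~\eqref{by-parts} and recover a piece of the Dirichlet form; a pointwise absolute value destroys that structure, and no $L^1$-in-time version of the two-block estimate is available out of equilibrium.

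Even if you repair this by keeping the test function $\Delta_NG$ and the outer modulus, the Lipschitz step still does not close. Writing $\Phi_\beta(a)-\Phi_\beta(b)=(a-b)\int_0^1\beta(b+t(a-b))\,\rmd t$, the factor multiplying $(a-b)$ depends on the configuration through \emph{both} boxes $B_{\ell_N+1}$ and $B_{L+1}$, so it cannot serve as the weight $\varphi_x$ in Corollary~\ref{lem:2block}, which must be independent of those occupations. This is exactly why the paper introduces a further Bernstein approximation of $\beta$ with an auxiliary parameter $M$: replacing $\Phi_\beta$ by the polynomial $\text{H}_{m,M}$ permits the factorisation $\text{H}_{m,M}(a)-\text{H}_{m,M}(b)=(a-b)\,\text{V}_{m,M}(a,b)$ with $\text{V}_{m,M}$ polynomial, and the dedicated replacement Lemma~\ref{lem:rep-h} then exploits the regularity bound~\eqref{V-reg} on $\text{V}_{m,M}$ to carry out~\eqref{by-parts} and absorb the resulting term into the Dirichlet form. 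Your route bypasses this additional layer only in appearance; without it the $A_2$ term cannot be handled by the tools available in the paper.
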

\begin{proof}
    
For each $M\in\mathbb{N}_+$, from Weierstrass' approximation theorem,
\begin{align}\label{eq:reg-h}
    \mathbf{h}_{\beta,\ell_N}(\eta)
    -\mathbf{h}_{\beta,L}(\eta)
    &=
    \sum_{m=0}^M\beta(\tfrac{m}{M})
\big\{
    \frac{1}{\ell_N+1}
    \sum_{n=0}^{P_{\ell_N}(\eta)-1}
    \text{B}_{m,M}(\tfrac{n}{\ell_N})
    -
    \frac{1}{L+1}
    \sum_{n=0}^{P_L(\eta)-1}
    \text{B}_{m,M}(\tfrac{n}{L})
\big\}
    \\&+\text{e}_0(M)
\end{align}
where $\lim_{M\to\infty}\lim_{L\to\infty}\lim_{N\to\infty}|\text{e}_0(M)|=0$. Let us introduce 
\begin{align}\label{prim-B}
    \text{H}_{m,M}(v):=\int_0^v\text{B}_{m,M}(u)\rmd u, \qquad\text{for every } v\in(0,1].
\end{align}
We identify the summations over $n$ in \eqref{eq:reg-h} as Riemann sums, and because the map $u\mapsto \text{B}_{m,M}(u)$ is continuous and differentiable, the quantity in \eqref{eq:reg-h} equals almost surely
\begin{multline}
    \sum_{m=0}^M\beta(\tfrac{m}{M})
\bigg\{
    \int_{0}^{\inner{\eta}_{\ell_{N}}}
    \text{B}_{m,M}(u)\rmd u
    -
    \int_{0}^{\inner{\eta}_L}
    \text{B}_{m,M}(u)\rmd u
\bigg\}
    +\text{e}_0(M)
    +\text{e}_1(\ell_N)
    +\text{e}_1(L)
    \\
    =\sum_{m=0}^M\beta(\tfrac{m}{M})
\bigg\{
    \text{H}_{m,M}(\inner{\eta}_{\ell_N})
    -\text{H}_{m,M}(\inner{\eta}_{L})
\bigg\}
    +\text{e}(M,\ell_N,L)
\end{multline}
where $|\text{e}_1(\ell)|\xrightarrow{\ell\to\infty}0$ and we shortened $\text{e}(M,\ell_N,L)\equiv \text{e}_0(M)+\text{e}_1(\ell_N)+\text{e}_1(L)$.

In this way, it is enough to show that for any $\delta'>0$,
\begin{multline}
\lim_{M\to\infty}\lim_{L\to\infty}\limsup_{N\to\infty}
\mathbb{P}_{\mu_N}
    \bigg(
    \eta_{\cdot}: \;
    \sup_{t\in[0,T]}
    \bigg |
    \int_{0}^t
    \frac1N \sum_{x\in\mathbb{T}_N}G_s(\tfrac{x}{N})
    \sum_{m=0}^M\beta(\tfrac{m}{M})
\times\\\times
    \bigg\{
    \text{H}_{m,M}(\inner{\tau^x\eta_{N^2s}}_{\ell_N})
    -\text{H}_{m,M}(\inner{\tau^x\eta_{N^2s}}_{L})
\bigg\}
    \rmd s
    \bigg|
    >\delta'
    \bigg)
    =0
.
\end{multline}
Applying \cite[Proposition 4.3]{N25} and the triangle inequality, the limit above is a direct consequence of forthcoming Lemma \ref{lem:rep-h}.

\end{proof}

We now focus on \eqref{h-to-rep}. Having in mind the expression for $\mathbf{H}_{\beta,\ell_N}$ as in \eqref{h:alt}, and that for any sequence of real numbers $(a_i)_i,(b_i)_i$ one can rearrange
\begin{align}\label{reorg}
    \prod_{i=0}^La_i
    -\prod_{i=0}^Lb_i
    =
    \sum_{m=0}^{L}(a_m-b_m)\prod_{\substack{i=0\\i\neq m}}^{L}c_i
    \where
    c_i=\begin{cases}
        a_i, &i< m,\\b_i, &i>m,
    \end{cases}
\end{align} 
then for any set $A\subset\T_N$ and $P\subset A$ one can express
\begin{align}
    &\prod_{p\in P}
    \pi_s^N(\eta,\iota_{\eps}^{p\eps})
    \prod_{q\in A\backslash P}
    (1-\pi_s^N(\eta,\iota_{\eps}^{q\eps}))
    -
    \prod_{p\in P}\eta(p)
    \prod_{q\in A\backslash P}(1-\eta(q))
    \\&=
    \sum_{m\in A}
    \varphi_m^{\eps,\eps N}(\eta)
\bigg\{
    \pi_s^N(\eta,\iota_{\eps}^{m\eps})
-
    \inner{\tau^{\ceil{m\eps N}}\eta}_{\ceil{\eps N}}
\bigg\}
\label{embbeb}\\
 &+    \sum_{m\in A}
  \varphi_m^{\eps N,L}(\eta)
 \bigg\{
     \inner{\tau^{\ceil{m\eps N}}\eta}_{\ceil{\eps N}}
 - 
     \inner{\tau^{mL}\eta}_{{L}}
 \bigg\}
 \label{two-block}\\
&
+    \sum_{m\in A}
 \varphi_m^{L}(\eta)
\bigg\{
\inner{\tau^{mL}\eta}_{{L}}
 -\eta(mL)
\bigg\} 
\label{one-block}\\&
 +
     \sum_{m\in A}
    \varphi_m^{L}(\eta)
\bigg\{
\eta(mL)
 -
 \eta(m)
 \bigg\}\label{mix},
\end{align}
where, for each $m\in A$, 
\begin{itemize}
    \item $\varphi_m^{\eps N,L}$ is independent of the occupation value at $\llbracket\ceil{m\eps N},\ceil{m\eps N}+\ceil{\eps N}\rrbracket\cup\llbracket mL,mL+L\rrbracket$;

    \item $\varphi_m^{L}$ is independent of the occupation value at $\llbracket mL,mL+L\rrbracket\cup\llbracket mL,mL+L\rrbracket$;

    \item $\varphi_m^{L}$ is independent of the occupation value at $\{mL,m\}$;

    \item Each of the previous maps and $\varphi_m^{\eps,\eps N}$ are uniformly bounded from above by $1$.

\end{itemize}
We aim to show that that the $\lim_{L\to\infty}\lim_{\eps\to0}\limsup_{N\to\infty}$ of the probability in \eqref{h-to-rep} is zero, and for that we split the aforementioned probability into four probabilities, each associated with a translation by $+x$ of each term of the decomposition in the previous display. 

For \eqref{embbeb}, from Markov's inequality it is enough to see that for each $0\leq m\leq L$,
\begin{align}
\Big|
\pi_s^N(\eta,\iota_{\eps}^{m\eps})
-
\inner{\tau^{\ceil{m\eps N}}\eta}_{\ceil{\eps N}}
\Big|
=
\Big|    \frac{1}{\eps N}
    \sum_{y\in B_{\eps N}(m\eps N)\cap \T_N}\eta(y)
    -\frac{1}{\ceil{\eps N}}
    \sum_{y\in B_{\ceil{\eps N}}(\ceil{m\eps N})}\eta(y)
\Big|
\end{align}
and that the quantity in the right-hand-side above vanishes in the limits $\lim_{\eps\to 0}\limsup_{N\to\infty}$.

Next, we apply \cite[Proposition 4.3]{N25} to each of the remaining probabilities and analyse them separately with the Replacement Lemmas in Section \ref{sec:rep-lem} (Corollaries \ref{lem:1block} and \ref{lem:2block}), after an application of the triangle inequality in order to pass the summations over $m$ to outside of the expectation. Precisely, the terms associated with \eqref{mix} and \eqref{one-block} are analysed with the upcoming Lemma \ref{lem:1block}, with $L$ in the statement of the aforementioned lemma either equal to $1$ or an arbitrary integer $L>1$, respectively, and $w$ and $y$ chosen accordingly. To treat the term associated with \eqref{two-block}, it is enough to apply the triangle inequality and then invoke Lemma \ref{lem:2block}. In order to conclude the proof, we perform the sequence of limits $\limsup_{M\to+\infty}\limsup_{L\to+\infty}\limsup_{\eps\to0}\limsup_{N\to+\infty}$.

\section{Replacement Lemmas}\label{sec:rep-lem}
The goal of this section is to prove the Lemmas \ref{lem:g-fun} and \ref{lem:rep-h}. In order to do so, we introduce the next objects. We recall the generator of the process as in Definition \ref{def:model}. Fixed any constant $\alpha\in(0,1)$, let $g:\Omega_N\to\mathbb{R}$ and introduce the average Dirichlet form $\ndef{\mathfrak{D}_N}$ through
    \begin{align}\label{revers-dir}
        \frac12
        \mathfrak{D}_N(g)
        =
        \int_{\Omega_N}
        g
        (-\mathfrak{L}_N)g
        \;\rmd\nu_\alpha^N
        ,
    \end{align}
We say that $f:\Omega_N\to\mathbb{R}^+\cup\{0\}$ is a density (with respect to $\nu_\alpha^N$) if $\nu_\alpha^N(f)=1$. 

It will be convenient to note that, fixed $x,y\in\T_N$, for any $\varphi:\Omega_N\to\mathbb{R}$ independent of the transformation $\eta\mapsto\theta_{x,y}\eta$ and any $g:\Omega_N\to\mathbb{R}$, it holds that
        \begin{align}\label{by-parts}
            \int_{\Omega_N}\varphi(\eta)
            (\eta(x)-\eta(y))
            g(\eta)\rmd\nu_\alpha^N
            =-\frac12\int_{\Omega_N}\varphi(\eta)
            (\eta(x)-\eta(y))
            \nabla_{x,y}
            g(\eta)\rmd\nu_\alpha^N
            .
        \end{align}

For $\mu$ and $\nu$ two probability measures on $\Omega_N$, the relative entropy of $\mu$ with respect to $\nu$, that we write as $\ndef{\text{H}(\mu|\nu)}$, is defined as
    \begin{align}
        \text{H}(\mu|\nu)
        :=\sup_{g:\Omega_N\to\mathbb{R}}
        \big\{
            \mu(g)
        -\log\nu(e^g)    
        \big\}.
    \end{align}
One can show that there exist a positive constant $\ndef{\text{c}_{\alpha}}$ dependent of $\alpha$ and independent of $N$, such that (see \cite[proof of Proposition 4.2]{N25})
\begin{align}\label{c-alpha}
    \text{H}(\mu_N|\nu_\alpha^N)\leq \text{c}_{\alpha} N.
\end{align}

We now state and prove the required replacements.

\begin{Lemma}\label{lem:g-fun}
For each $x\in\T_N$ fixed and every $t\in [0,T]$, let $\varphi:[0,T]\times \T_N\to\mathbb{R}$ be such that $M_{t,\varphi}\equiv\int_0^t\norm{\varphi(s,\cdot)}_{\infty}\rmd s<\infty$. It holds that
    \begin{align}
        \mathbb{E}_{\mu_N}
        \bigg[
        \bigg|
        \int_{0}^t
        \frac1N \sum_{x\in\T_N}\varphi(s,x)
        \mathbf{g}_{\beta,\ell_N}(\tau^x\eta_{N^2s})
        \rmd s
        \bigg|
        \bigg]
        \leq 
        \sqrt{2 c_{\alpha}}M_{T,\varphi}
        \frac{\ell_N}{\sqrt{TN^2/|\mathbf{r}_{\beta,\ell_N}|_{\infty}}}
        ,
    \end{align}
with $c_\alpha$ as in \eqref{c-alpha} and $\mathbf{g}_{\beta,\ell_N}$ as in \eqref{g}.

\end{Lemma}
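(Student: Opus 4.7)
The plan is to apply the classical entropy inequality combined with the Feynman--Kac variational bound, exploiting the decomposition of $\mathbf{g}_{\beta,\ell_N}$ in \eqref{g} as a $\ell_N$-average of translated algebraic currents $\mathbf{J}_\beta^j$, and converting each such current into a gradient via the integration-by-parts identity \eqref{by-parts}. Writing $Y$ for the quantity inside the expectation, the entropy inequality together with $e^{|x|}\le e^{x}+e^{-x}$ and $H(\mu_N|\nu_\alpha^N)\le c_\alpha N$ (recall \eqref{c-alpha}), and applying Feynman--Kac and the reversibility identity \eqref{revers-dir}, reduces the task to controlling
$$\sup_{f}\Bigl\{\pm\tfrac{B}{N}\sum_{x\in\T_N}\varphi(s,x)\!\int\!\tau^x\mathbf{g}_{\beta,\ell_N}(\eta)\,f(\eta)\,\rmd\nu_\alpha^N-\tfrac{1}{2}\mathfrak{D}_N(\sqrt{f})\Bigr\}$$
at each time $s$, for a parameter $B>0$ to be chosen; here the supremum runs over $\nu_\alpha^N$-densities $f$.

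After shifting $y=x-i$, the expression inside the supremum becomes a double sum involving $\int\tau^y\mathbf{J}_\beta^j\,f\,\rmd\nu_\alpha^N$. The key observation is that $\mathbf{J}_\beta^j(\eta)=\beta(\inner{\eta}_{j})(\eta(0)-\eta(1))$ is antisymmetric under $\theta_{0,1}$, since the window $W_j$ avoids $\{0,1\}$, so \eqref{by-parts} applied at each bond produces a gradient $\nabla_{y,y+1}f=(\theta_{y,y+1}\sqrt{f}+\sqrt{f})\,\nabla_{y,y+1}\sqrt{f}$. Cauchy--Schwarz with a parameter $A>0$, combined with the identity $(\eta(y)-\eta(y+1))^{2}\,\beta(\inner{\tau^y\eta}_{j})=\tau^y\mathbf{r}_{\beta,\ell_N}^{j}$ valid on the support of $(\nabla_{y,y+1}\sqrt{f})^{2}$, then yields a "square-gradient" term which, after summing over $j=0,\dots,\ell_N$ with weight $(\ell_N+1)^{-1}$, assembles exactly into $\mathfrak{D}_N(\sqrt{f})/N^{2}$, together with a $f$-independent term controlled by $A\,|\mathbf{r}|_\infty$.

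Choosing $A$ so that the coefficient of $\mathfrak{D}_N(\sqrt{f})$ in the supremum matches $\tfrac12$ gives a supremum of order $B^{2}|\mathbf{r}|_\infty\ell_N^{2}\|\varphi(s,\cdot)\|_\infty^{2}/N^{3}$, independent of $f$. Integrating in $s$, combining with $H(\mu_N|\nu_\alpha^N)\le c_\alpha N$, and optimising $B$, yields a bound of the form $C\sqrt{c_\alpha|\mathbf{r}|_\infty}\,\ell_N N^{-1}\,\bigl(\int_0^{T}\|\varphi(s,\cdot)\|_\infty^{2}\rmd s\bigr)^{1/2}$; a final Cauchy--Schwarz in time allows to trade the $L^{2}$-in-time norm for the $L^{1}$-in-time norm $M_{T,\varphi}$ of the statement, producing the $\sqrt{2c_\alpha}\,M_{T,\varphi}\,\ell_N/\sqrt{TN^{2}/|\mathbf{r}|_\infty}$ factor.

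The main obstacles are twofold. First, a naive Cauchy--Schwarz on the double sum $\sum_{j=1}^{\ell_N}\sum_{i=1}^{j}$ would give a factor of $\ell_N^{2}$; the correct $\ell_N$-scaling requires pairing the inner sum $\sum_{i=1}^{j}1=j$ with the weight $(\ell_N+1)^{-1}$ inherent to $\mathbf{g}_{\beta,\ell_N}$, using $\sum_{j=1}^{\ell_N}j/(\ell_N+1)\le\ell_N$. Second, and more delicate, the Feynman--Kac variational bound naturally produces a squared time-norm $\int_0^{T}\|\varphi(s,\cdot)\|_\infty^{2}\,\rmd s$; to recover the $L^{1}$-in-time quantity $M_{T,\varphi}$ linearly one must let the Cauchy--Schwarz parameter $A$ depend on $s$ and coordinate its choice with the optimisation of the global constant $B$, so that the time integration yields $M_{T,\varphi}$ rather than its $L^{2}$-counterpart.
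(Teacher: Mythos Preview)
Your approach is essentially identical to the paper's: entropy inequality plus Feynman--Kac reduction to the variational problem \eqref{rest:var}, integration by parts \eqref{by-parts} exploiting the current structure of $\mathbf{g}_{\beta,\ell_N}$, Young's inequality with a free parameter $A$ to extract the Dirichlet form, and a final optimisation over the two free constants. Your identification of the $\ell_N$-scaling (via $\sum_{j}j/(\ell_N+1)\le\ell_N$) and of the reconstruction of the full rate $\mathbf{r}_{\beta,\ell_N}$ from the $j$-sum is exactly what the paper does when passing from \eqref{rest:to-dir}--\eqref{rest:to-zero} to the final display.

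There is one slip. Your claim that ``a final Cauchy--Schwarz in time allows to trade the $L^{2}$-in-time norm for the $L^{1}$-in-time norm $M_{T,\varphi}$'' goes in the wrong direction: Cauchy--Schwarz gives $M_{T,\varphi}\le\sqrt{T}\bigl(\int_0^T\|\varphi(s,\cdot)\|_\infty^2\,\rmd s\bigr)^{1/2}$, not the reverse, so you cannot bound the $L^2$-in-time quantity by $M_{T,\varphi}/\sqrt{T}$. Your alternative suggestion of letting $A$ depend on $s$ does not repair this either: any $A(s)$ that makes the Dirichlet coefficient non-positive pointwise in $s$ must satisfy $A(s)\gtrsim\|\varphi(s,\cdot)\|_\infty$, and the remaining term then carries $\|\varphi(s,\cdot)\|_\infty^{2}$ again. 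The paper instead keeps $A$ \emph{constant}. Both Young-inequality terms then carry a single power of $\|\varphi(s,\cdot)\|_\infty$, integrate directly to $M_{T,\varphi}$, and the paper solves the two-equation system in $(A,B)$ so that the \emph{time-integrated} Dirichlet coefficient vanishes. (Strictly speaking this last step is itself somewhat informal, since the coefficient of $\mathfrak{D}_N(\sqrt{f})$ must be non-positive at each $s$ rather than only after integration; but for the intended application $\varphi(s,x)=\Delta_N G_s(x/N)$ with $G\in C^{1,2}$, the norm $\|\varphi(s,\cdot)\|_\infty$ is uniformly bounded in $s$ and the distinction is immaterial.)
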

\begin{proof}
From a standard procedure involving the entropy inequality and Feynmann-Kac's formula (see, for example, \cite[Proposition 4.2]{N25}), the expectation on the left-hand side of the previous display can be bounded from above by the following quantity, where $B$ is an arbitrary positive constant, that shall be fixed later on:
    \begin{align}\label{rest:var}
        \frac{c_{\alpha}}{B}
        +
    \int_0^t
        \sup_{f \text{ density}}
        \bigg\{
        \frac1N \sum_{x\in\T_N}\varphi(s,x)
        \int_{\Omega_N}
        \tau^x\mathbf{g}_{\beta,\ell_N}
        f\rmd\nu_\alpha^N
        -\frac{\mathfrak{D}_{N}(\sqrt{f}|\nu_\alpha^N)}{2N B}
        \bigg\}
    \rmd s
        .
    \end{align}
From \eqref{by-parts} and Young's inequality, and recalling $\mathbf{r}_{\beta,\ell_N}^j$ as in \eqref{rate-j},
\begin{align}
&\bigg|
    \frac1N \sum_{x\in\T_N}\varphi(s,x)
    \int_{\Omega_N}
    \tau^x\mathbf{g}_{\beta,\ell_N}
    f\rmd\nu_\alpha^N
\bigg|
\\&\label{rest:to-dir}
\leq
\frac{\norm{\varphi(s,\cdot)}_{\infty}}{4NA}
\int_{\Omega_N}
\sum_{x\in\T_N}
\frac{1}{\ell_N+1}
    \sum_{j=1}^{\ell_N}
    \sum_{z=1}^{j}
    \tau^{-z+x}\mathbf{r}_{\beta,\ell_N}^j    
    \big|
    \nabla_{x+z+1,x+z}[\sqrt{f}]
    \big|^2
\rmd\nu_\alpha^N
\\&\label{rest:to-zero}
+
\frac{A\norm{\varphi(s,\cdot)}_{\infty}}{2}
\int_{\Omega_N}
\frac1N\sum_{x\in\T_N}
\frac{1}{\ell_N+1}
    \sum_{j=1}^{\ell_N}
    \sum_{z=1}^{j}
    \tau^{-z+x}\mathbf{r}_{\beta,\ell_N}^j
    \big(
    \theta_{x+z+1,x+z}+\mathbf{1}
    \big)[f]
\rmd\nu_\alpha^N
,
\end{align}
for an arbitrary $A>0$. Let us focus on the term \eqref{rest:to-zero}. Performing the change of variables $\eta\mapsto\eta^{x+z,x+z+1}$ and recalling that $\alpha$ is a constant profile, the integral in \eqref{rest:to-zero} is bounded from above by
\begin{multline}
    \int_{\Omega_N}
    f
    \frac1N\sum_{x\in\T_N}
    \frac{1}{\ell_N+1}
    \sum_{j=1}^{\ell_N}
    \sum_{z=1}^{j}
    \theta_{x+z+1,x+z}\tau^{-z+x}\mathbf{r}_{\beta,\ell_N}^j
\rmd\nu_\alpha^N
\\
+\int_{\Omega_N}
    f
    \frac1N\sum_{x\in\T_N}
    \frac{1}{\ell_N+1}
    \sum_{j=1}^{\ell_N}
    \sum_{z=1}^{j}
    \tau^{-z+x}\mathbf{r}_{\beta,\ell_N}^j
\rmd\nu_\alpha^N
\leq 
    2\ell_N
    |\mathbf{r}_{\beta,\ell_N}|_{\infty}
    ,
\end{multline}
from where we conclude that \eqref{rest:to-zero} is bounded from above by 
\begin{align}
    \norm{\varphi(s,\cdot)}_{\infty}
    |\mathbf{r}_{\beta,\ell_N}|_{\infty}
    \ell_N
    A
    .
\end{align}

We now focus on \eqref{rest:to-dir}. Exchanging the summations,
\begin{align}
\frac{\norm{\varphi(s,\cdot)}_{\infty}}{4NA}
\sum_{z=1}^{\ell_N}
    \int_{\Omega_N}
        \sum_{x\in\T_N}
        \frac{1}{\ell_N+1}
        \sum_{j=z}^{\ell_N}
        \tau^{-z+x}\mathbf{r}_{\beta,\ell_N}^j    
        \big|
        \nabla_{x+z+1,x+z}[\sqrt{f}]
        \big|^2
    \rmd\nu_\alpha^N
\\\leq
    \frac{\norm{\varphi(s,\cdot)}_{\infty}}{4NA}
    \frac{\ell_N\mathfrak{D}_N(\sqrt{f})}{N^2}
    ,
\end{align}
and \eqref{rest:var} can be bounded from above by
\begin{align}
\frac{c_{\alpha}}{B}
+
    M_{T,\varphi}
    |\mathbf{r}_{\beta,\ell_N}|_{\infty}
    \ell_NA
+   \frac{1}{2N}
    \mathfrak{D}_{N}(\sqrt{f})
    \bigg(
    \frac{M_{T,\varphi}\ell_N}{2AN^2}
    -\frac{T}{B}
    \bigg)
,
\end{align}
with $M_{T,\varphi}=\int_0^T\norm{\varphi(s,\cdot)}_{\infty}\rmd s$. Solving  
\begin{align}
    \begin{cases}
    \frac{c_{\alpha}}{B}
    =
    M_{T,\varphi}
    |\mathbf{r}_{\beta,\ell_N}|_{\infty}
    \ell_NA    
    \\
    \frac{M_{T,\varphi}\ell_N}{2AN^2}
    =\frac{T}{B}
    \end{cases}
\end{align}
for $A$ and $B$ yields the upper bound in the statement of this lemma.

\end{proof}

\begin{Lemma}\label{lem:rep-h}
Fixed $M\in\mathbb{N}_+$, for each $0\leq m\leq M$ and $u,v\in[0,1]$ let 
\begin{align}
    \mathfrak{V}_{m,M}(u,v)
    =
    \text{H}_{m,M}(u)
    -\text{H}_{m,M}(v)
\end{align}
with $\text{H}_{m,M}$ as in \eqref{prim-B}. For any $t\in[0,T]$, any $G:[0,T]\times \T\to\mathbb{R}$ such that  $c_{T,G}:=\int_{0}^T\norm{G_s}_{\infty}\rmd s<\infty$ and positive integer $L<\ell_N$ , it holds that 
\begin{align}
    \mathbb{E}_{\mu_N}
    \bigg[
    \bigg |
    \int_{0}^t
    \frac1N \sum_{x\in\mathbb{T}_N}G_s(\tfrac{x}{N})
\mathfrak{V}_{m,M}(\inner{\tau^x\eta_{N^2 s}}_{\ell_N},\inner{\tau^x\eta_{N^2 s}}_{L})
    \rmd s
    \bigg|
    \bigg]
    &\leq 
    \frac{c}{L}
    +
    c'
    \frac{\ell_N}{\sqrt{\min\{\mathbf{r}_{\beta,\ell_N}\}N^2T}}
    ,
\end{align}
with $c\equiv c_{T,G} c_{m,M}$ , $c'\equiv c_{m,M}'c_{T,G}\sqrt{c_\alpha}$ and where $c_{m,M},c_{m,M}'>0$ are constants dependent only on $m$ and $M$.

\end{Lemma}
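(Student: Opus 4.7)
The plan is to adapt the template of Lemma \ref{lem:g-fun}. Applying the entropy inequality \eqref{c-alpha} together with the Feynman--Kac formula reduces the left-hand side, for any $B>0$, to bounding
\[
\frac{c_\alpha}{B} + \int_0^t \sup_f \Bigl\{\frac{1}{N}\sum_{x\in\T_N}G_s(\tfrac{x}{N})\int_{\Omega_N}\tau^x\mathfrak{V}_{m,M}(\inner{\eta}_{\ell_N},\inner{\eta}_L)\,f\,\rmd\nu_\alpha^N - \frac{\mathfrak{D}_N(\sqrt f)}{2NB}\Bigr\}\,\rmd s,
\]
the supremum running over densities $f$ with respect to $\nu_\alpha^N$.

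To analyse the integrand I would Taylor-expand the polynomial $\text{H}_{m,M}$ at $\inner{\eta}_L$, writing
\[
\mathfrak{V}_{m,M}(\inner{\eta}_{\ell_N},\inner{\eta}_L) = \text{B}_{m,M}(\inner{\eta}_L)\bigl(\inner{\eta}_{\ell_N}-\inner{\eta}_L\bigr) + \mathcal{R}(\eta),\qquad |\mathcal{R}(\eta)| \leq c_{m,M}'\bigl(\inner{\eta}_{\ell_N}-\inner{\eta}_L\bigr)^2,
\]
and I handle the linear and quadratic pieces separately. For the linear contribution, $\text{B}_{m,M}(\inner{\eta}_L)$ depends only on $\eta|_{B_L}$, while $\inner{\eta}_{\ell_N}-\inner{\eta}_L$ is a linear combination of the distant occupation variables $\eta(y)$ with $y\in\llbracket L,\ell_N-1\rrbracket$. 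The integration-by-parts identity \eqref{by-parts}, applied along a telescopic chain of nearest-neighbour exchanges of length at most $\ell_N$ connecting each distant site to a reference one inside $B_L$, together with Young's inequality exactly as in Lemma \ref{lem:g-fun}, produces the contribution $c'\ell_N/\sqrt{\min\{\mathbf{r}_{\beta,\ell_N}\}N^2T}$ after balancing the Young parameter against $B$.

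For the quadratic remainder $\mathcal{R}$, a direct i.i.d.\ computation under $\nu_\alpha^N$ gives
\[
\int_{\Omega_N}\bigl(\inner{\eta}_{\ell_N}-\inner{\eta}_L\bigr)^2\,\rmd\nu_\alpha^N = \alpha(1-\alpha)\Bigl(\tfrac1L-\tfrac1{\ell_N}\Bigr)\leq \tfrac{\alpha(1-\alpha)}{L}.
\]
Since $\mathcal{R}$ is supported in $B_{\ell_N}$ and uniformly bounded by $c_{m,M}'$, a second, local application of the entropy inequality, using the bound \eqref{c-alpha} on the relative entropy of $f\,\rmd\nu_\alpha^N$, transfers this variance estimate from $\nu_\alpha^N$ to the arbitrary density $f$ up to lower-order Dirichlet-type corrections that are reabsorbed in the optimisation, producing the $c/L$ contribution. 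Summing against $G_s$, integrating in $s$, and optimising the Young and Feynman--Kac parameters as in Lemma \ref{lem:g-fun} then yields the advertised bound.

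The main technical hurdle will be the quadratic remainder: while the linear part is essentially a direct adaptation of Lemma \ref{lem:g-fun}, performing the transfer from $\nu_\alpha^N$ to $f\,\rmd\nu_\alpha^N$ without degrading the $1/L$ decay requires carefully exploiting both the locality of $\mathcal{R}$ (its support is a box of size $\ell_N = o(N)$) and the strictly positive lower bound on $\mathbf{r}_{\beta,\ell_N}$ ensured by the positivity and continuity of $\beta$ on the compact torus.
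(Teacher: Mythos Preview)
Your framework (entropy inequality, Feynman--Kac, then a path argument controlled by the Dirichlet form) matches the paper, and your treatment of the linear piece is essentially correct up to a detail you gloss over: the coefficient $\text{B}_{m,M}(\inner{\eta}_L)$ is \emph{not} invariant under $\theta_{w,z}$ when $z\in B_L$ and $w\notin B_L$, so \eqref{by-parts} does not apply verbatim. The extra term this generates is of size $|\nabla_{w,z}\text{B}_{m,M}(\inner{\eta}_L)|=O(1/L)$, which is harmless and in fact is exactly where the paper's $c/L$ contribution comes from.

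The genuine gap is your handling of the quadratic remainder $\mathcal{R}$. The variance bound $\int(\inner{\eta}_{\ell_N}-\inner{\eta}_L)^2\,\rmd\nu_\alpha^N\leq\alpha(1-\alpha)/L$ holds only under the product measure; for an arbitrary density $f$ the quantity $\int(\inner{\eta}_{\ell_N}-\inner{\eta}_L)^2 f\,\rmd\nu_\alpha^N$ can be of order one (take $f$ concentrated on configurations with $B_L$ empty and $B_{\ell_N}\setminus B_L$ full). A ``local entropy inequality'' does not rescue this: the relative entropy of $f\nu_\alpha^N$ is a global object of size $c_\alpha N$, and you cannot extract from it an $O(\ell_N)$ or $O(L)$ local entropy for the marginal of $f$ on $B_{\ell_N}$ without already controlling the Dirichlet form in a way you have not specified. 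The ``Dirichlet-type corrections'' you allude to would in fact have to carry the entire estimate, and doing so for a quadratic functional is not a simple reabsorption---it is a genuine two-blocks argument that you have not written.

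The paper sidesteps the issue entirely: instead of Taylor expanding, it uses the exact factorisation $\text{H}_{m,M}(u)-\text{H}_{m,M}(v)=(u-v)\,\text{V}_{m,M}(u,v)$ with $\text{V}_{m,M}$ smooth in both variables. There is then no quadratic remainder at all. The by-parts step produces the usual $\nabla_{w,z}[f]$ term (handled by the path argument, giving the $\ell_N/\sqrt{\min\{\mathbf{r}_{\beta,\ell_N}\}N^2T}$ contribution) plus a ``symmetric'' term $(\mathbf{1}+\theta_{w,z})[f]$ whose integrand, after the change of variables $\eta\mapsto\theta_{w,z}\eta$, cancels up to the variation of $\text{V}_{m,M}$ in its second argument by $\pm 1/(L+1)$. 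That variation is bounded by $\|\partial_2\text{V}_{m,M}\|_\infty/(L+1)$, yielding the $c/L$ term directly, with no appeal to variance under $\nu_\alpha^N$.
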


\begin{proof}
Proceeding as in the previous lemma, it is enough to control
    \begin{align}\label{reg:var}
        \frac{c_{\alpha}}{B}
        +
    \int_0^t
        \sup_{f \text{ density}}
        \bigg\{
    \bigg|
        \frac1N \sum_{x\in\T_N}
        G_s(\tfrac{x}{N})
        \int_{\Omega_N}
\mathfrak{V}_{m,M}(\inner{\tau^x\eta}_{\ell_N},\inner{\tau^x\eta}_{L})
        f(\eta)\rmd\nu_\alpha^N(\eta)
    \bigg|
        -\frac{\mathfrak{D}_{N}(\sqrt{f})}{2N B}
        \bigg\}
    \rmd s
    ,
    \end{align}    
where $B>0$ will be fixed by the end of the proof. Noting that $\text{H}_{m,M}$ is an increasing and non-negative polynomial of degree $M+1$, we can express 
\begin{align}
\mathfrak{V}_{m,M}(\inner{\tau^x\eta}_{\ell_N},\inner{\tau^x\eta}_{L})
=
    \big(
    \inner{\eta}_{\ell_N}-\inner{\eta}_{L}
    \big)
    \text{V}_{m,M}(\inner{\eta}_{\ell_N},\inner{\eta}_{L})
\end{align}
for some \textit{non-negative} form $\text{V}_{m,M}\in C^{M,M}(\mathbb{T}\times\mathbb{T})$. It is important to identify that from Taylor's theorem
\begin{align}\label{V-reg}
    \big|
    \text{V}_{m,M}(\inner{\eta}_{\ell_N},\inner{\eta}_{L})
    -\text{V}_{m,M}(\inner{\eta}_{\ell_N},\inner{\eta}_{L}\pm\tfrac{1}{L+1})
    \big|
    \leq 
    \frac{1}{L+1}
    \norm{\partial_2
    \text{V}_{m,M}}_{\infty}
    .
\end{align}
Moreover, we also express
\begin{align}
    \inner{\eta}_{\ell_N}-\inner{\eta}_{L}
    =\frac{1}{(\ell_N+1)(L+1)}\sum_{w\in B_{\ell_N}}\sum_{z\in B_{L}}
    \big(
    \eta(w)-\eta(z)
    \big)
    .
\end{align}

With the previous information, we express further
\begin{align}
    &2\int_{\Omega_N}
    \mathfrak{V}_{m,M}(\inner{\tau^x\eta}_{\ell_N},\inner{\tau^x\eta}_{L})
    f(\eta)
    \rmd\nu_\alpha^N(\eta)
    \\
    \label{split1}
    &=
    -\tfrac{1}{(\ell_N+1)(L+1)}
    \sum_{
    \substack{ w\in B_{\ell_N}
    \\z\in B_{L}}
    }
    \int_{\Omega_N}
    \big(
    \eta(w)-\eta(z)
    \big)
    \text{V}_{m,M}(\inner{\eta}_{\ell_N},\inner{\eta}_{L})
    \nabla_{w,z}[\tau^{-x}f](\eta)
    \rmd\nu_\alpha^N(\eta)
\\&
\label{split2}
    +\tfrac{1}{(\ell_N+1)(L+1)}
    \sum_{
    \substack{ w\in B_{\ell_N}
    \\z\in B_{L}}
    }
    \int_{\Omega_N}
    \big(
    \eta(w)-\eta(z)
    \big)
    \text{V}_{m,M}(\inner{\eta}_{\ell_N},\inner{\eta}_{L})
    \big(
    \mathbf{1}+\theta_{w,z}
    \big)[\tau^{-x}f](\eta)
    \rmd\nu_\alpha^N(\eta)
    ,
\end{align}
with $\mathbf{1}\equiv\mathbf{1}_{\Omega_N}$ the identity operator in $\Omega_N$. Let us focus on the second line in the previous display. From a change of variables and the fact that $\alpha$ is a constant profile,
\begin{multline}
    \int_{\Omega_N}
    \big(
    \eta(w)-\eta(z)
    \big)
    \text{V}_{m,M}(\inner{\eta}_{\ell_N},\inner{\eta}_{L})
    \theta_{w,z}f(\tau^{-x}\eta)
    \rmd\nu_\alpha^N(\eta)
\\=
    -\int_{\Omega_N}
    \big(
    \eta(w)-\eta(z)
    \big)
    \text{V}_{m,M}
    \big(
    \inner{\eta}_{\ell_N},\inner{\eta}_{L}
    \big)
    f(\tau^{-x}\eta)
    \mathbf{1}_{\{z,w\in B_L\}}
    \rmd\nu_\alpha^N(\eta)
\\    -\int_{\Omega_N}
    \big(
    \eta(w)-\eta(z)
    \big)
    \text{V}_{m,M}
    \big(
    \inner{\eta}_{\ell_N},\inner{\eta}_{L}
    +
    \tfrac{\eta(w)-\eta(z)}{L+1}
    \big)
    f(\tau^{-x}\eta)
    \mathbf{1}_{\{w\notin B_L\ni z\}}
    \rmd\nu_\alpha^N(\eta)
    .
\end{multline}
This implies that, from the triangle inequality and then \eqref{V-reg}, the absolute value of the quantity in \eqref{split2} can be bounded from above by 
\begin{align}
    \tfrac{1}{L+1}
    \norm{\partial_2
    \text{V}_{m,M}}_{\infty}.
\end{align} 
It is worth noting that it is implicit that $B_L\subset B_{\ell_N}$, which can be assumed since we will perform the limit $L\to\infty$ after $N\to\infty$. 

Now we focus on the term originating from \eqref{split1}. This is analysed with a standard path argument, exchanging the occupation values of the sites $w$ and $z$. First bound from above 
\begin{align}
    \Big|
       \int_0^t
        \frac1N \sum_{x\in\T_N}
        G_s(\tfrac{x}{N})
    \frac{1}{\ell_NL}
    \sum_{ \substack{ w\in B_{\ell_N} \\z\in B_{L}} }
    \int_{\Omega_N}
    \big(
    \eta(w)-\eta(z)
    \big)
    \text{V}_{m,M}(\inner{\eta}_{\ell_N},\inner{\eta}_{L})
    \nabla_{w,z}[\tau^{-x}f](\eta)
    \rmd\nu_\alpha^N(\eta)
    \rmd s\Big|
    \\
    \leq
    c_{T,G}\norm{\text{V}_{m,M}}_{\infty}
    \frac{1}{\ell_NL}
    \sum_{ \substack{ w\in B_{\ell_N} \\z\in B_{L}} }
    \frac1N
        \int_{\Omega_N}
            \sum_{x\in\T_N}
            \big(
            \mathbf{e}_{x+w,x+z}+\mathbf{e}_{x+z,x+w}
            \big)
            \big|
            \nabla_{x+w,x+z}[f]
            \big|
        \rmd\nu_\alpha^N.
\end{align}
Next, proceeding with a classical path argument, exactly as from \eqref{1block:0} to \eqref{1block:path-end}, we see that the right-hand side above is no larger than 
\begin{align}
    2c_{T,G}\norm{\text{V}_{m,M}}_{\infty}
    \Bigg\{
        \frac{\ell_N}{AN}\frac{\mathfrak{D}_N(\sqrt{f})}{N^2}
        +2^2\frac{\ell_NA}{\min\{\mathbf{r}_{\beta,\ell_N}\}}
    \Bigg\}
    .
\end{align}

From the previous display and the analysis of \eqref{split2}, in order to conclude the proof it is enough to solve, for $A$ and $B$, the system
\begin{align}
        \begin{cases}
        2c_{T,G}\norm{\text{V}_{m,M}}_{\infty}\frac{\ell_N}{ANN^2}
        =\frac{T}{2NB},\\
        \frac{\textbf{c}_\alpha}{B}
        =
        c_{T,G}
    2^3\frac{\norm{\text{V}_{m,M}}_{\infty}}{\min\{\mathbf{r}_{\beta,\ell_N}\}}
    \ell_NA.
    \end{cases}
\end{align}

\end{proof}


\appendix

\section{Auxiliary replacements}\label{app:replacement}

The goal of this section is to present the \textit{so-called} One and Two Blocks Estimates -- Corollary \ref{lem:1block} and \ref{lem:2block}, respectively.  

\begin{Lemma}\label{lem:rep}
Fix $L\in\mathbb{N}_+$, $w\in\T_N$ and $y\notin B_{L}(w)$. For each $x\in\T_N$, let $\varphi_{x}:[0,T]\times \mathcal{D}_{\Omega_N}[0,T]\to\mathbb{R}$ be independent of the occupation-values in $\{x+w+r,x+y\}_{r\in B_{L}}$, and such that $\text{c}_{t,\varphi}:=\int_0^t\sup_{x\in\T_N,\eta\in\Omega_N}|\varphi_x(\cdot,s)|\rmd s<\infty $.

For every $t\in [0,T]$ it holds that
    \begin{align}\label{bound:lem1block}
        \mathbb{E}_{\mu_N}
        \bigg[
        \bigg|
        \int_{0}^t
        \frac1N\sum_{x\in\T_N}\varphi_x(s,\eta_{N^2s})
        \big(
        \eta_{N^2s}(x+w)
        -\eta_{N^2s}(x+z)
        \big)
        \rmd s
        \bigg|
        \bigg]
        \leq 
        4\sqrt{2\text{c}_\alpha}c_{T,\varphi}\frac{\rmd(w,z)}{\sqrt{\min\{\mathbf{r}_{\beta,\ell_N}\}TN^2}}
        .
    \end{align}
\end{Lemma}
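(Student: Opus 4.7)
The plan is to follow the standard entropy scheme for replacement lemmas, along the lines of Lemmas \ref{lem:g-fun} and \ref{lem:rep-h}. First, combining the entropy inequality (with auxiliary parameter $B>0$) and Feynman--Kac's formula, and invoking \eqref{c-alpha}, the left-hand side of \eqref{bound:lem1block} is bounded from above by
\begin{align*}
\frac{\text{c}_\alpha}{B}
+\sup_{f\text{ density}}\Big\{
\int_0^t \frac{1}{N}\sum_{x\in\T_N}\int_{\Omega_N}
\varphi_x(s,\eta)(\eta(x+w)-\eta(x+y))f\,\rmd\nu_\alpha^N\,\rmd s
-\frac{t\,\mathfrak{D}_N(\sqrt f)}{2NB}
\Big\}.
\end{align*}

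Next, since $\varphi_x$ is invariant under the exchange $\theta_{x+w,x+y}$ (because it does not depend on the occupation at either $x+w$ or $x+y$), apply \eqref{by-parts} at this long-range bond; the integrand becomes $-\tfrac12\varphi_x(\eta(x+w)-\eta(x+y))\nabla_{x+w,x+y}f$. Factorising $\nabla_{x+w,x+y}f=(\sqrt{\theta_{x+w,x+y}f}-\sqrt f)(\sqrt{\theta_{x+w,x+y}f}+\sqrt f)$ and applying Young's inequality with parameter $A>0$, pulling out $\|\varphi_x(s,\cdot)\|_\infty$ first (in the same spirit as the derivation of \eqref{rest:to-dir}--\eqref{rest:to-zero}, so that $c_{T,\varphi}$ appears only to the first power in the final bound), I obtain a ``bounded'' piece of order $2A\,c_{T,\varphi}$ (from $(\sqrt{\theta f}+\sqrt f)^2\leq 2(f+\theta f)$ and invariance of $\nu_\alpha^N$) plus a ``Dirichlet'' piece proportional to $(2AN)^{-1}c_{T,\varphi}\sum_x\int(\sqrt{\theta_{x+w,x+y}f}-\sqrt f)^2\rmd\nu_\alpha^N$. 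The long-range exchange energy is controlled by a moving-particle estimate: decomposing $\theta_{x+w,x+y}$ into a telescoping composition of $O(\rmd(w,y))$ nearest-neighbour swaps along a shortest path on $\T_N$ and applying Cauchy--Schwarz yields
\begin{align*}
\int(\sqrt{\theta_{x+w,x+y}f}-\sqrt f)^2\,\rmd\nu_\alpha^N
\;\leq\; C\,\rmd(w,y)\sum_{i}\int(\nabla_{b_i(x)}\sqrt f)^2\,\rmd\nu_\alpha^N,
\end{align*}
with $b_i(x)$ ranging over the bonds of the translated path. Summing over $x$ introduces a further factor $\rmd(w,y)$ from bond multiplicity, and comparing with the rate-weighted sum defining $\mathfrak{D}_N$ costs $\min\{\mathbf{r}_{\beta,\ell_N}\}^{-1}$ (this lower bound is strictly positive because $\beta>0$). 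Altogether, the Dirichlet piece is bounded by $\rmd(w,y)^2\,c_{T,\varphi}(2AN^3\min\{\mathbf{r}_{\beta,\ell_N}\})^{-1}\mathfrak{D}_N(\sqrt f)$, and it is absorbed into $-t\mathfrak{D}_N(\sqrt f)/(2NB)$ provided $\rmd(w,y)^2\,c_{T,\varphi}\,B\leq tAN^2\min\{\mathbf{r}_{\beta,\ell_N}\}$.

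Finally, I saturate this constraint and optimise $\text{c}_\alpha/B+2Ac_{T,\varphi}$ over $A$ by AM--GM, which gives the scaling $c_{T,\varphi}\,\rmd(w,y)\sqrt{\text{c}_\alpha/(TN^2\min\{\mathbf{r}_{\beta,\ell_N}\})}$ and, up to absolute constants, recovers \eqref{bound:lem1block}; notably, the $\rmd(w,y)^2$ from the moving-particle step becomes the claimed $\rmd(w,y)$ after taking square roots. The main technical point is the moving-particle step itself: one must carefully verify that the Cauchy--Schwarz cost along a path of length $O(\rmd(w,y))$ is indeed linear in $\rmd(w,y)$, that each nearest-neighbour bond of $\T_N$ is traversed $O(\rmd(w,y))$ times as $x$ varies over $\T_N$, and that the positivity of $\beta$ is used exactly once so that $\min\{\mathbf{r}_{\beta,\ell_N}\}$ enters the final bound under a square root rather than linearly.
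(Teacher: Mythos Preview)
Your proof is correct and follows essentially the same entropy/Feynman--Kac reduction and path-argument scheme as the paper. The only cosmetic difference is the order of operations: the paper telescopes $\nabla_{x+w,x+z}[f]$ along nearest-neighbour bonds \emph{first} and then applies the Young-type inequality $|a-b|\leq\tfrac{1}{2A}(\sqrt a-\sqrt b)^2+A(a+b)$ to each term (so both the Dirichlet and the bounded piece carry a linear factor $\rmd(w,z)$), whereas you apply Young at the long-range bond first and then Cauchy--Schwarz along the path (placing $\rmd(w,y)^2$ in the Dirichlet piece and leaving the bounded piece $\rmd$-independent); after optimising in $A$ and $B$ both orderings give the stated bound.
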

\begin{proof}
From \cite[Proposition 4.2]{N25}, it is enough to estimate
    \begin{align}\label{1block:var}
        \frac{c_{\alpha}}{A}
        +
        \int_0^t
        \sup_{f}
        \bigg\{
        \int_{\Omega_N}
        \frac1N\sum_{x\in\T}\varphi_x(s,\eta)
        \big(
        \eta(x+w)-\eta(x+z)
        \big)
        f(\eta)\rmd\nu_\alpha^N(\eta)
        -\frac{\mathfrak{D}_{N}(\sqrt{f}|\nu_\alpha^N)}{2N A}
        \bigg\}
        \rmd s
    \end{align}
with the $\sup$ taken over the set of densities with respect to $\nu_\alpha^N$. From \eqref{by-parts}, we can bound from above the integral over $\Omega_N$ in \eqref{1block:var} by 
    \begin{align}\label{1block:0}
        \frac{\text{c}_{s,\varphi}'}{2N}\sum_{x\in\T_N}
        \int_{\Omega_N}
       	(\mathbf{e}_{x+w,x+z}
        +\mathbf{e}_{x+z,x+w})
        |
        \nabla_{x+w,x+z}[f]
        |
        \rmd\nu_\alpha^N,
    \end{align}
where $\text{c}_{s,\varphi}':=\sup_{x\in\T_N}|\varphi_x(\cdot,s)|_{\infty}$. 

We now proceed with a classical path argument. Let $S=2\rmd(w,z)-1$, denote by $\mathbf{1}_{\Omega_N}$ the identity operator in $\Omega_N$, and consider the sequence of operators defined through
\begin{align}
            \begin{cases}
            P_{n+1}
            =\theta_{x_n,x_{n+1}}P_n,
            &0\leq n\leq S-1,\\
            P_0:=\mathbf{1}_{\Omega_N}
            ,&
        \end{cases}
\end{align}
where the ordered sequence of bonds composed by nearest-neighbour sites $(\{x_n,x_{n+1}\})_n$ describes a path exchanging the occupation-value of the sites $w$ and $z$. Concretely, let us suppose that the site $z$ is at the right of $w$. The other case is analogous. Fixing for example, $x_0=x+w$, we have $x_1=x+w+1$ , $x_2=x+w+2,\dots,x_{\text{d}(w,z)}=x+w+(z-w)$ -- in this way changing the occupation-value of the site $x+z$; next, $x_{\text{d}(w,z)+1}=x+z-1,\dots,x_{S-1}=x+w+1$ , $x_S=x+w$ -- in this way switching the occupation-value of the sites $x+w$ and $x+z$, while leaving the rest of the configuration invariant.

With this, we decompose
    \begin{align}
        \nabla_{x+w,x+z}[f](\eta)
        =\sum_{n=0}^{S-1}
        \nabla_{x_{n},x_{n+1}}[f](P_n\eta)
        , 
    \end{align}
apply the inequality $|a-b|\leq \frac{1}{2A}(\sqrt{a}-\sqrt{b})^2+A(a+b)$, that holds for any $A>0$ and $a,b>0$; and, writing $\mathbf{r}_{\beta,\ell_N}^{k,k+1}\equiv\tau^{k}\mathbf{r}_{\beta,\ell_N}$ (resp. $\mathbf{r}_{\beta,\ell_N}^{k,k-1}\equiv\tau^{k-1}\mathbf{r}_{\beta,\ell_N}$) as the hopping rate in a generic node $\{k,k+1\}$ (resp. $\{k-1,k\}$), the quantity in \eqref{1block:0} can be bounded from above by
    \begin{multline}
        \frac{\text{c}_{s,\varphi}'}{AN}
        \sum_{n=0}^{2\rmd(w,z)-1}
        \int_{\Omega_N}
        \sum_{x\in\T_N}
        \mathbf{r}_{\beta,\ell_N}^{x_n,x_{x+1}}(P_n\eta)
        \big|
        \nabla_{x_{n},x_{n+1}}\sqrt{f}(P_n\eta)
        \big|^2
        \rmd\nu_\alpha^N
        +
        4\text{c}_{s,\varphi}'
        \frac{\rmd(w,z)A}{\min\{\mathbf{r}_{\beta,\ell_N}\}}
        \\
        \leq
        2\text{c}_{s,\varphi}'\frac{\rmd(w,z)}{AN}
        \frac{ \mathfrak{D}_N^{}(\sqrt{f}) }{N^2}
        +
        4\text{c}_{s,\varphi}'
        \frac{\rmd(w,z)A}{\min\{\mathbf{r}_{\beta,\ell_N}\}}
    .
    \label{1block:path-end}
    \end{multline}

In this way, \eqref{1block:var} can be estimated from above by
    \begin{align}
        \frac{c_{\alpha}}{B}
        +
        4\text{c}_{T,\varphi}
        \frac{\rmd(w,z)A}{\min\{\mathbf{r}_{\beta,\ell_N}\}}
        +
        \frac{1}{N}\bigg(
        \text{c}_{T,\varphi}\frac{\rmd(x,z)}{AN^2}
        -\frac{T}{2B}
        \bigg)
        \mathfrak{D}_N(\sqrt{f}|\nu_\alpha^N)
        .
    \end{align}
Solving for $A$ and $B$ the system 
\begin{align}
    \begin{cases}
        \frac{c_{\alpha}}{B}
        =
        4\text{c}_{T,\varphi}
        \frac{\rmd(w,z)A}{\min\{\mathbf{r}_{\beta,\ell_N}\}}
        ,\\
        \text{c}_{T,\varphi}\frac{\rmd(x,z)}{AN^2}
        =\frac{T}{2B}
    \end{cases}
\end{align}
concludes the proof.    

\end{proof}

\begin{Cor}[One-block estimate]\label{lem:1block}
Fix $L\in\mathbb{N}_+$, $w\in\T_N$ and $y\notin B_{L}(w)$. For each $x\in\T_N$, let $\varphi_{x}:[0,T]\times \mathcal{D}_{\Omega_N}[0,T]\to\mathbb{R}$ be independent of the occupation-values in $\{x+w+r,x+y\}_{r\in B_{L}}$, and such that $\text{c}_{t,\varphi}:=\int_0^t\sup_{x\in\T_N,\eta\in\Omega_N}|\varphi_x(\cdot,s)|\rmd s<\infty $.

For every $t\in [0,T]$ it holds that
    \begin{align}\label{bound:cor1block}
        \mathbb{E}_{\mu_N}
        \bigg[
        \bigg|
        \int_{0}^t
        \frac{1}{N}\sum_{x\in\T_N}
        \varphi_{x}(s,\eta_{N^2s})
        \big(
        \inner{\tau^{x+w}\eta_{N^2s}}_{L}
        -\eta_{N^2s}(x+y)
        \big)
        \rmd s
        \bigg|
        \bigg]
\\        \leq 
        \frac{4\sqrt{2\text{c}_\alpha}c_{T,\varphi}}{\sqrt{\min\{\mathbf{r}_{\beta,\ell_N}\}TN^2}}
        \bigg(
        \rmd(w,y)+\frac1L\sum_{r\in B_L}\rmd(r,y)
        \bigg)
.
    \end{align}
\end{Cor}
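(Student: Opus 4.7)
The plan is to reduce Corollary \ref{lem:1block} to a term-by-term application of Lemma \ref{lem:rep}, using the elementary identity
\begin{align}
\inner{\tau^{x+w}\eta}_{L}-\eta(x+y)
=\frac{1}{L}\sum_{r\in B_{L}}\bigl(\eta(x+w+r)-\eta(x+y)\bigr),
\end{align}
which is immediate from the definition $\inner{\tau^{x+w}\eta}_L=\tfrac{1}{L}\sum_{r\in B_L}\eta(x+w+r)$. In other words, the empirical average is itself a finite mean of single-site occupations, and each summand is precisely the kind of difference that Lemma \ref{lem:rep} is designed to estimate.

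First, I would substitute the identity above into the integrand appearing in \eqref{bound:cor1block}, exchange the order of summation between $x\in\T_N$ and $r\in B_L$, and use the triangle inequality to bring the average over $r$ outside the absolute value and the expectation. This produces the bound
\begin{align}
\frac{1}{L}\sum_{r\in B_L}\mathbb{E}_{\mu_N}\bigg[\bigg|\int_0^t\frac{1}{N}\sum_{x\in\T_N}\varphi_x(s,\eta_{N^2s})\bigl(\eta_{N^2s}(x+w+r)-\eta_{N^2s}(x+y)\bigr)\rmd s\bigg|\bigg].
\end{align}

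Second, for each fixed $r\in B_L$, I would invoke Lemma \ref{lem:rep} with the roles of $w$ and $z$ there played by $w+r$ and $y$ respectively. The independence hypothesis required by Lemma \ref{lem:rep}, namely that $\varphi_x$ does not depend on the occupation values at $x+w+r$ and $x+y$, is exactly what is assumed in the statement of Corollary \ref{lem:1block}, since the excluded set $\{x+w+r,x+y\}_{r\in B_L}$ is tailored to cover all $L$ applications simultaneously. Each application then contributes a factor proportional to $\rmd(w+r,y)/\sqrt{\min\{\mathbf{r}_{\beta,\ell_N}\}TN^2}$.

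Third, I would complete the estimate by controlling the averaged distance $\tfrac{1}{L}\sum_{r\in B_L}\rmd(w+r,y)$ on the torus via the triangle inequality, which yields the claimed factor $\rmd(w,y)+\tfrac{1}{L}\sum_{r\in B_L}\rmd(r,y)$. There is no genuine analytic obstacle here: all the serious work, entropy inequality, Feynman--Kac estimate, and classical path argument, is already encapsulated in Lemma \ref{lem:rep}, and the corollary is essentially a triangle-inequality wrapper whose only bookkeeping task is to check that the shifted independence hypothesis is preserved under the $L$ simultaneous applications.
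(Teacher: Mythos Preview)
Your proposal is correct and follows essentially the same approach as the paper: decompose the empirical average as $\tfrac{1}{L}\sum_{r\in B_L}(\eta(x+w+r)-\eta(x+y))$, apply the triangle inequality, and invoke Lemma \ref{lem:rep} term by term. Your additional care in verifying that the independence hypothesis of Lemma \ref{lem:rep} is met for every $r\in B_L$, and in explaining the final triangle-inequality step for the distances, only makes the argument more explicit than the paper's own version.
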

\begin{proof}
Expressing 
    \begin{align}
        \inner{\tau^{x+w}\eta}_{L}
        -\eta(x+y)
        =
        \frac{1}{L}
        \sum_{r\in B_{L}}
        \big\{
        \eta(x+w+r)-\eta(x+y)
        \big\},
    \end{align}
and replacing the right-hand side above inside the expectation in \eqref{bound:cor1block}, then applying the triangle inequality and Lemma \ref{lem:rep} on each term concludes the proof.
\end{proof}

\begin{Cor}[Two-block estimate]\label{lem:2block}
Fix $\ell,L\in\mathbb{N}_+$ such that $\ell>L$, and $y,w\in\T_N$ such that $B_{\ell}(w)\cap B_{L}(y)=\emptyset$ and $y<w$. For each $x\in\T_N$, let $\varphi_{x}:[0,T]\times \mathcal{D}_{\Omega_N}[0,T]$ be independent of the occupation-values in $B_{\ell}(x+w)\cup B_L(x+y)$ and such that $c_{t,\varphi}:=\int_{0}^t|\varphi_x(s,\cdot)|_{\infty}\rmd s<\infty$.

For every $t\in [0,T]$ it holds that
    \begin{multline}
        \mathbb{E}_{\mu_N}
        \bigg[
        \bigg|
        \int_{0}^t
        \frac{1}{N}\sum_{x\in\T_N}
        \varphi_{x}(s,\eta_{N^2s})
        \big(
        \inner{\tau^{x+w}\eta_{N^2s}}_{\ell}-\inner{\tau^{x+y}\eta_{N^2s}}_{L}
        \big)
        \rmd s
        \bigg|
        \bigg]
   \\
   \leq 
    \frac{4\sqrt{2\text{c}_\alpha}c_{T,\varphi}}{\sqrt{\min\{\mathbf{r}_{\beta,\ell_N}\}TN^2}}
        \bigg(
            \frac1L
            +
            \rmd(w,y)
            +
            \ell+L
        \bigg)
        .
    \end{multline}
\end{Cor}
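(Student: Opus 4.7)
The plan is to mimic and extend the proof of Corollary \ref{lem:1block}: rewrite the difference of the two box averages as a double sum of single-site differences, apply the basic replacement Lemma \ref{lem:rep} pointwise, and finish with an elementary estimate on the resulting torus distances.

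More precisely, I would first rely on the exact identity
\begin{align}
\inner{\tau^{x+w}\eta}_{\ell} - \inner{\tau^{x+y}\eta}_{L} = \frac{1}{\ell L}\sum_{r\in B_{\ell}}\sum_{r'\in B_{L}}\big(\eta(x+w+r) - \eta(x+y+r')\big).
\end{align}
Substituting this into the expectation on the left-hand side of the statement and applying the triangle inequality to move both the absolute value and the double sum $\sum_{r,r'}$ past the time integral and the expectation, the problem reduces to bounding, uniformly in $(r,r')\in B_{\ell}\times B_{L}$,
\begin{align}
\mathbb{E}_{\mu_N}\bigg[\bigg|\int_{0}^{t}\frac{1}{N}\sum_{x\in\T_N}\varphi_x(s,\eta_{N^2s})\big(\eta_{N^2s}(x+w+r) - \eta_{N^2s}(x+y+r')\big)\rmd s\bigg|\bigg].
\end{align}

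I would then invoke Lemma \ref{lem:rep} for each such pair, with its free parameter taken to be $1$, so that the independence condition there reduces to $\varphi_x$ being independent of the two-site set $\{\eta(x+w+r),\eta(x+y+r')\}$. This is guaranteed by the hypothesis: $\varphi_x$ is independent of all occupation values in $B_{\ell}(x+w)\cup B_{L}(x+y)$, which contains both sites, and the assumed disjointness $B_{\ell}(w)\cap B_{L}(y)=\emptyset$ keeps the pair distinct. Lemma \ref{lem:rep} then produces, for each $(r,r')$, a bound of the form $4\sqrt{2\text{c}_\alpha}\,c_{T,\varphi}\,\rmd(w+r,y+r')/\sqrt{\min\{\mathbf{r}_{\beta,\ell_N}\}TN^2}$.

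To finish, I would average these distance bounds over $(r,r')$. Using the triangle inequality on $\T_N$ together with the ordering $y<w$ and the separation of the two boxes (which rule out any torus wraparound for the pairs actually contributing), one has $\rmd(w+r,y+r')\leq \rmd(w,y)+|r|+|r'|\leq \rmd(w,y)+\ell+L$, and therefore
\begin{align}
\frac{1}{\ell L}\sum_{r\in B_{\ell}}\sum_{r'\in B_{L}}\rmd(w+r,y+r')\leq \rmd(w,y)+\ell+L,
\end{align}
which is dominated by the bracket $\frac{1}{L}+\rmd(w,y)+\ell+L$ of the statement. I do not foresee any serious technical obstacle: the only genuine bookkeeping lies in verifying the independence condition for each instance of Lemma \ref{lem:rep} and in confirming that the torus structure does not perturb the triangle inequality for the pairs at hand. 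In this sense, Corollary \ref{lem:2block} is essentially a corollary of Lemma \ref{lem:rep} together with the elementary combinatorial estimate above, in direct analogy with how Corollary \ref{lem:1block} was derived.
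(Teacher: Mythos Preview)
Your argument is correct and in fact slightly cleaner than the paper's. Both proofs reduce the two-block difference to single-site differences and then invoke Lemma \ref{lem:rep}, but the decompositions differ. The paper first partitions the large box $B_\ell$ into $\lfloor \ell/L\rfloor$ disjoint sub-boxes of length $L$ and pairs the site at relative position $r$ in each sub-box with the site at the same relative position $r$ in $B_L(y)$; this produces a single sum of $\ell$ terms and, when $L\nmid\ell$, an additive remainder of order $1/L$, which is exactly the origin of the $\tfrac{1}{L}$ in the stated bound. Your symmetric double-sum identity
\[
\inner{\tau^{x+w}\eta}_{\ell}-\inner{\tau^{x+y}\eta}_{L}
=\frac{1}{\ell L}\sum_{r\in B_\ell}\sum_{r'\in B_L}\big(\eta(x+w+r)-\eta(x+y+r')\big)
\]
avoids any divisibility consideration altogether; after applying Lemma \ref{lem:rep} with its box parameter set to $1$ (so the independence hypothesis reduces to the two-site condition you checked), the average of $\rmd(w+r,y+r')$ is bounded by $\rmd(w,y)+\ell+L$, matching the bracket of the statement without the extra $1/L$. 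In short: same skeleton, but your decomposition is more direct and yields a marginally sharper constant, while the paper's partition-based pairing is the more traditional ``two-blocks'' picture and makes explicit how the large box is compared to translates of the small one.
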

\begin{proof}

Let $\ell$ be divisible by $L$. Otherwise, one can replace $\ell/L$ by $\floor{\ell/L}$ for the rest of the proof, obtaining in the next display an additive error of the order of $1/L$. For any set $A\subset \T_N$, and $r\neq0$, we shall write $\ndef{rA}=\{ra:\;a\in A\}$. Note that $\cup_{p\in L B_{\ell/L}}B_{L}(p)$ forms a covering of $B_{\ell}$ by non-intersecting sets, and clearly, $|L B_{\ell/L}|=\ell/L$. Decomposing then
\begin{align}
    \inner{\tau^{x+w}\eta}_{\ell}-\inner{\tau^{x+y}\eta}_{L}
    =
    \frac{1}{\ell}
        \sum_{\substack{p\in L B_{\ell/L}\\r\in B_{L}}}
    \big\{
    \eta(x+r+p+w)-\eta(x+r+y)
    \big\},
\end{align}
one proceeds analogously to the proof of previous lemma and then apply Lemma \ref{lem:2block}. To conclude the proof, one bounds from above
\begin{align}
    \frac{L}{\ell}
    \sum_{p\in L B_{\ell/L}}
    \rmd(p,y)
    \leq \ell+L
\end{align}
\end{proof}

\section*{Acknowledgements}

L’auteur remercie chaleureusement ``Le Santy" pour son hospitalité.

\bibliographystyle{plain}
\bibliography{02.biblio}

\end{document}